\documentclass[12pt,psamsfonts, draft]{amsart}
\usepackage{amssymb,amsmath}
\usepackage{amsthm}
\usepackage{mathrsfs}
\usepackage{enumerate, enumitem, indentfirst}
\usepackage[fleqn,tbtags]{mathtools}
\usepackage{color}
\usepackage{courier}
\usepackage{hyperref}
\usepackage[a4paper,left=2.1cm,right=2.1cm,top=3.2cm,bottom=3.2cm]{geometry}
\usepackage{multicol}
\usepackage{xr}

\usepackage{multicol}
\usepackage{geometry}
\usepackage{marginnote}

\frenchspacing

 \newtheorem{theorem}{Theorem}
 \newtheorem{corollary}[theorem]{Corollary}
 \newtheorem{lemma}[theorem]{Lemma}
 \newtheorem{proposition}[theorem]{Proposition}


\newcommand{\dupN}{\mathbb{N}}

\newcommand{\seq}[1]{(#1_{n})_{n\in\dupN}}

\newcommand{\nen}{n\in\mathbb{N}}

\DeclareMathOperator{\ex}{ex}

\DeclarePairedDelimiterX\sip[2]{(}{)}{#1\,\delimsize\vert\,#2}
\DeclarePairedDelimiterX\sipa[2]{\langle}{\rangle_{\!_{A}}}{#1\,\delimsize\vert\,#2}
\DeclarePairedDelimiterX\sipb[2]{\langle}{\rangle_{\!_{B}}}{#1\,\delimsize\vert\,#2}
\DeclarePairedDelimiterX\siptilde[2]{(}{)_{\!_{\widetilde{A}}}}{#1\,\delimsize\vert\,#2}
\DeclarePairedDelimiterX\sipw[2]{(}{)_{\mathfrak{w}}}{#1\,\delimsize\vert\,#2}
\DeclarePairedDelimiterX\sipg[2]{(}{)_{g}}{#1\,\delimsize\vert\,#2}
\DeclarePairedDelimiterX\sipt[2]{(}{)_{\mathfrak{t}}}{#1\,\delimsize\vert\,#2}
\DeclarePairedDelimiterX\set[2]{\{}{\}}{#1\,\delimsize\vert\,#2}
\DeclarePairedDelimiterX\dual[2]{\langle}{\rangle}{#1,#2}



\DeclareMathOperator{\dom}{dom}
\DeclareMathOperator{\ran}{ran}

\newcommand{\projb}{\mathbf{P}(\hilb)}
\newcommand{\proj}{\mathbf{P}(\hil)}

\newcommand{\D}{\mathfrak{X}}

\newcommand{\hil}{\mathcal{H}}

\newcommand{\hila}{\hil_A}
\newcommand{\hilb}{\hil_B}

\newcommand{\bh}{\mathbf{B}(\hil)}

\newcommand{\bph}{\mathbf{B}_+(\mathcal{H})}


\newcommand{\xf}{\mathfrak{x}}

\newcommand{\dtw} {\mathbf{D}_{\tf}\wf}
\newcommand{\dwt}{\mathbf{D}_{\wf}\tf}

\newcommand{\uf}{\mathfrak{u}}
\newcommand{\vf}{\mathfrak{v}}
\newcommand{\tf} {\mathfrak{t}}
\newcommand{\wf} {\mathfrak{w}}
\newcommand{\ssf} {\mathfrak{s}}
\newcommand{\Xx}{\mathfrak{X}}

\newcommand{\nullf}{\mathfrak{0}}

\newcommand{\fpx}{\mathcal{F}_+(\Xx)}
\renewcommand*{\phi}{\varphi}

\DeclareMathOperator{\tform}{\mathfrak{t}}
\DeclareMathOperator{\wform}{\mathfrak{w}}



\newcommand{\hilw}{\hil_{\wform}}
\newcommand{\hilt}{\hil_{\tform}}

\newcommand{\lk}{\lambda_k}
\newcommand{\lkp}{\lambda_{k+1}}
\newcommand{\dst}{\mathbf{D}_{\ssf}\tf}
\newcommand{\dts}{\mathbf{D}_{\tf}\ssf}

\renewcommand*{\phi}{\varphi}









\newcommand{\kert}{\ker\tf}

\DeclareMathOperator{\quasi}{Q}

\begin{document}
\title{Quasi-units as orthogonal projections}

\author[Zsigmond Tarcsay]{Zsigmond Tarcsay}
\address{Zsigmond Tarcsay, Department of Applied Analysis\\ E\"otv\"os Lor\'and University\\ P\'azm\'any P\'eter s\'et\'any 1/c.\\ Budapest H-1117\\ Hungary}

\email{tarcsay@cs.elte.hu}
\author[Tam\'as Titkos]{Tam\'as Titkos}
\address{Tam\'as Titkos, Alfr\'ed R\'enyi Institute of Mathematics\\ Hungarian Academy of Sciences\\ Re\'altanoda u. 13-15.\\
Budapest H-1053\\ Hungary\\ and BBS University of Applied Sciences\\ Alkotm\'any u. 9.\\
Budapest H-1054\\ Hungary}

\email{titkos.tamas@renyi.mta.hu}

\thanks{Corresponding author: Tam\'as Titkos (titkos.tamas@renyi.mta.hu)}
\thanks{Zsigmond Tarcsay was supported by the Hungarian Ministry of Human
Capacities, NTP-NFT\"O-17. }
\subjclass{Primary 47A07 Secondary 47B65}

\keywords{Quasi-unit; Orthogonal projection; Extreme points; Galois connection.}

\begin{abstract} The notion of quasi-unit has been introduced by Yosida in unital Riesz spaces. Later on, a fruitful potential theoretic generalization was obtained by Arsove and Leutwiler. Due to the work of Eriksson and Leutwiler, this notion also turned out to be an effective tool by investigating the extreme structure of operator segments. This paper has multiple purposes which are interwoven, and are intended to be equally important. On the one hand, we  identify quasi-units as orthogonal projections acting on an appropriate auxiliary Hilbert space. As projections form a lattice and are extremal points of the effect algebra, we conclude the same properties for quasi-units. Our second aim is to apply these results for nonnegative sesquilinear forms. Constructing an order preserving bijection between operator- and form segments, we provide a characterization of being  extremal in the convexity sense, and we give a necessary and sufficient condition for the existence of the greatest lower bound  of two forms. Closing the paper we revisit some statements by using the machinery developed by Hassi, Sebesty\'en, and de Snoo. It will turn out that quasi-units are exactly the closed elements with respect to the antitone Galois connection induced by parallel addition and subtraction.
\end{abstract}

\maketitle

\section{Introduction} 
The notion of quasi-unit has been introduced by Yosida in \cite{Yosida} by developing an abstract Radon-Nikodym theorem in vector lattices. Originally, a nonnegative element $E$ of a Riesz space with identity $1$ was called a
quasi-unit if it satisfies $E\wedge (1-E)=0$. This property was systematically investigated earlier by Freudenthal in \cite{Freudenthal} and by Riesz in \cite{Riesz} who called such an element as a disjoint part of $1$. The importance of this concept in abstract potential theory was recognized by Arsove and Leutwiler in \cite{AL}. Later on, this notion turned out to be an effective tool by investigating the partial order of positive operators acting on a complex Hilbert space. Namely, quasi-units are characterized as extreme points of operator segments \cite{EL}. As the set of bounded positive operators is not a lattice, the definition of being a quasi-unit is much more difficult. One possible treatment involves the theory of Lebesgue-type decompositions developed by Ando in \cite{Ando-LD}. For this reason, we begin Section \ref{S: operators} with a short overview of the corresponding notions and results. After that, we present a factorization method based on \cite{Tarcsay_lebdec}, which allows us to identify quasi-units as orthogonal projections acting on an appropriate auxiliary Hilbert space. As projections form a lattice and are extremal points of the effect algebra, we obtain the same properties for quasi-units.

Our second aim is to carry over these results  for nonnegative sesquilinear forms. Namely, to investigate the order structure of nonnegative sesquilinear forms, and to prove that the set of quasi-units is a lattice. This will be done in Section \ref{S: forms}. In order to buttress the relevance of such a generalization, we mention some essentially different structures where nonnegative sesquilinear forms arise in a very natural way. For example, every nonnegative finite measure induces a form (i.e., a semi inner product) on the vector space of simple functions in the usual way. It is known that the set of nonnegative finite measures is a lattice (see \cite[\textsection 36]{ab}) in contrast to the set of nonnegative forms, as we shall see. That means that the infimum  of two induced forms may not exist among forms, although it does exist among forms induced by measures.  We encounter the same difficulty when investigating the order structure of representable functionals on a ${}^*$-algebra: it may happen that not every nonnegative form is induced by a representable functional. That is the reason why the set of forms and the set of functionals may carry completely different order structures. To see both positive and negative examples  we refer the reader to  \cite[Example 5.3 and 5.4]{glasgow}).

In Section \ref{S: paps}, we will make some general comments regarding parallel sum and difference of forms. It will turn out that quasi-units are precisely the closed elements with respect to the antitone Galois connection induced by parallel addition. We will also show how some results of this paper could be proved by using the machinery developed by Hassi, Sebesty\'en, and de Snoo in \cite{lebdec}, and by mimicking the purely algebraic presentation of the subject presented by Eriksson and Leutwiler in \cite{EL}.

\section{Positive operators}\label{S: operators}
Let $\hil$ be a complex Hilbert space and denote by $\bh$ the $C^*$-algebra of bounded linear operators on $\hil$ and by $\proj$ the lattice of orthogonal projections. A bounded operator $A$ is called positive ($A\in\bph$, in symbols) if its quadratic form is positive semi-definite, i.e.,
\begin{align*}
\sip{Ax}{x}\geq0,\qquad x\in\hil.
\end{align*}
According to Kadison's famous result \cite{kadison}, the set of self-adjoint operators with this partial order is an ``anti-lattice''. It means that the greatest lower bound of two self-adjoint operators $A$ and $B$ exists if and only if they are comparable, that is,
\begin{align*}
A\leq B\qquad\mbox{or}\qquad B\leq A.
\end{align*} The situation in the positive cone is much more difficult. The infimum problem in $\bph$ was a long standing open problem, which was solved first in the finite dimensional case by Gudder and Moreland in \cite{GM}. In the general case, a necessary and sufficient condition was established by Ando in \cite{andoinf}. Although the answer is that $\bph$ is far from being a lattice, a crucial role in the characterization has been played by a ``minimum-like'' operation called parallel addition, and an operator called the generalized short. The first part of this section is devoted to collect all the necessary notions and to offer several characterizations of the generalized short.

In order to present Ando's results, we have to recall some definitions. Let us fix two operators $A,B\in\bph$. Singularity of $A$ and $B$ ($A\perp B$, in symbols) means that $$\ran A^{1/2}\cap\ran B^{1/2}=\{0\},$$ or equivalently, $C\leq A$ and $C\leq B$ imply $C=0$ for all $C\in\bph$. We say that the operator $B$ is $A$-absolutely continuous ($B\ll A$, in notation) if there exists a monotone increasing sequence $(B_n)_{n\in\mathbb{N}}$ in $\bph$ whose strong limit is $B$, and each member satisfies $B_n\leq c_n A$ with some suitable constant $c_n\geq0$. A nontrivial fact is that absolute continuity is equivalent with closability (see \cite[Theorem 3]{Tarcsay_lebdec}). We say that $B$ is $A$-closable if
\begin{align*}
\sip{B(x_n-x_m)}{x_n-x_m}\to0\quad\mbox{and}\quad\sip{Ax_n}{x_n}\to0\qquad\Longrightarrow\qquad\sip{Bx_n}{x_n}\to0
\end{align*}
holds for all sequence $(x_n)_{n\in\mathbb{N}}$. Both absolute continuity and singularity can be characterized by means of parallel addition. The parallel sum of $A$ and $B$ is the unique operator (denoted by $A:B$) whose quadratic form is
$$\sip{(A:B)x}{x}=\inf\set{\sip{A(x-y)}{x-y}+\sip{By}{y}}{y\in\hil},\qquad x\in\hil.$$
For the properties of parallel addition we refer the reader to Anderson and Duffin \cite{sapa}, to Pekarev and {\v{S}}mul'jan \cite{PekarevSmuljan}, and to Erikson and Leutwiler \cite{EL}. According to \cite[Lemma 1]{Ando-LD}, and \cite[Theorem 11]{AT} we can describe absolute continuity and singularity as follows:
\begin{align*}
B\ll A\quad\Longleftrightarrow\quad B=\sup_{n\in\mathbb{N}}B:(nA)=:[A]B\qquad\mbox{and}\qquad A\perp B\quad\Longleftrightarrow A:B=0.
\end{align*}

The operator $[A]B$ is called the generalized short of $B$ with respect to (the range of) $A$. The remaining part of this section is mainly about the theoretical importance and about various descriptions of this object. The first result is the analogue of the well known Lebesgue decomposition of   measures.

\begin{theorem}\label{T: LDT}
For every positive operators $A$ and $B$ the decomposition
$$B=[A]B+(B-[A]B)$$
is of Lebesgue-type, that is,  $[A]B$ is $A$-absolutely continuous and $B-[A]B$ is $A$-singular. Moreover, $[A]B$ is the largest element of the set $$\set{C\in\bph}{C\leq B,~C\ll A}.$$ This type of decomposition is unique if and only if $[A]B\leq\alpha A$ holds for some $\alpha\geq0$.
\end{theorem}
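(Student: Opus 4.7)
The plan is to extract $[A]B$ as the strong limit of a canonical monotone approximating sequence, deduce absolute continuity and maximality from formal manipulations with the parallel sum, and obtain singularity of the complement via the ``no nontrivial common lower bound'' characterisation of $\perp$ quoted in the excerpt. Uniqueness will be addressed last.

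First I would set $B_n := B:(nA)$. The defining infimum formula for the parallel sum shows directly that $B:C \leq B$, $B:C \leq C$ and that $B:C$ is monotone in each variable, so $(B_n)$ is increasing and bounded above by $B$; hence its strong limit $[A]B = \sup_n B_n$ exists, lies below $B$, and from $B_n \leq nA$ it satisfies $[A]B \ll A$ by the very definition of absolute continuity. Maximality is then free: if $C \leq B$ and $C \ll A$, monotonicity of the parallel sum together with the identity $C = \sup_n C:(nA)$ give $C = \sup_n C:(nA) \leq \sup_n B:(nA) = [A]B$.

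For singularity of $R := B - [A]B$ I would use the criterion that $A \perp R$ iff every $C \in \bph$ with $C \leq A$ and $C \leq R$ is zero. Take such a $C$. From $C \leq A$ one immediately has $C \ll A$, and $[A]B + C \leq B$. Since the sum of two $A$-absolutely continuous operators is again $A$-absolutely continuous (add the approximating sequences), maximality applied to $[A]B + C$ forces $[A]B + C \leq [A]B$, so $C = 0$. For the easy direction of uniqueness, assume $[A]B \leq \alpha A$ and let $B = B_1 + B_2$ be any decomposition with $B_1 \ll A$ and $B_2 \perp A$. Maximality gives $B_1 \leq [A]B$; setting $D := [A]B - B_1 \geq 0$ and comparing with $B_1 + B_2 = [A]B + R$ yields $D = B_2 - R \leq B_2$, while $D \leq [A]B \leq \alpha A$. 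The common lower bound $D$ below $\alpha A$ and $B_2$, combined with $B_2 \perp A$ (a condition insensitive to positive rescaling of $A$), forces $D = 0$, so $B_1 = [A]B$ and $B_2 = R$.

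The main obstacle is the converse of uniqueness. Assuming $[A]B \not\leq \alpha A$ for every $\alpha \geq 0$, I would aim to produce a second, genuinely different Lebesgue-type decomposition. The natural attempt is to split $[A]B = B_1 + D$ with $0 \neq D \geq 0$ and to set $B_2 := R + D$; the subtle point is that $R + D$ must still be $A$-singular, whereas sums of two singular positive operators need not be singular, so $D$ cannot be chosen as a crude truncation. My plan is to construct $D$ through the factorisation of $[A]B$ on the auxiliary Hilbert space associated with $A$ (the approach of \cite{Tarcsay_lebdec} already advertised in the introduction): the failure of $[A]B \leq \alpha A$ is equivalent to the unboundedness of the Radon--Nikodym-type operator implementing that factorisation, and a spectral truncation of this unbounded operator should produce a nonzero $D \leq [A]B$ for which $R + D$ remains $A$-singular. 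Turning this into a rigorous singularity argument is where I expect most of the real work to lie; the earlier three steps are essentially formal.
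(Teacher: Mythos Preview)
The paper does not actually prove this theorem: immediately after the statement it simply refers the reader to \cite[Theorem 2 and Theorem 6]{Ando-LD} (with \cite{Arlinskii-ps} and \cite{Tarcsay_lebdec} as additional references). So there is no ``paper's own proof'' to compare against beyond the literature citation.

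On the merits, your argument for the Lebesgue-type decomposition, the maximality of $[A]B$, and the \emph{sufficiency} direction of the uniqueness criterion is correct and is essentially the standard route (the one in Ando's original paper). The monotonicity and domination facts about the parallel sum that you invoke are exactly those the paper records, your closure-under-sums claim for absolute continuity is fine (monotone limits of sums of the approximants), and the singularity argument via ``no nontrivial common lower bound'' is clean.

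The only genuine gap is the one you flag yourself: the \emph{necessity} of $[A]B\leq\alpha A$ for uniqueness. Your plan---pass to the auxiliary Hilbert space, use that the failure of domination means the Radon--Nikodym-type operator is unbounded, and spectrally truncate to peel off a nonzero $D$ that can be transferred to the singular part---is exactly the mechanism Ando uses, so the strategy is sound. What remains nontrivial is verifying that the resulting $R+D$ is $A$-singular; this does come out of the spectral picture (the piece $D$ one builds lives over a spectral subspace where the derivative is ``infinite relative to $A$''), but it needs to be written down carefully rather than asserted. Since the paper outsources this entirely to \cite{Ando-LD}, your proposal already goes further than the text itself; completing the spectral construction would make it a self-contained proof.
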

For the proof we refer the reader to \cite[Theorem 2 and Theorem 6]{Ando-LD}. (See also \cite{Arlinskii-ps} and \cite{Tarcsay_lebdec}.)
Analogous results hold true for many different objects such as representable functionals \cite[Theorem 3.3]{Tarcsay_repr}, and positive definite operator functions \cite[(2.8) Theorem]{as}.
As it was mentioned earlier, Ando proved in \cite[Theorem 4]{andoinf} that the generalized short also plays a crucial role by deciding whether the infimum (or greatest lower bound) of two operator exists.
\begin{theorem}\label{T: inf}
Given $A,B\in\bph$, the infimum $A\wedge B$ exists in $\bph$ if and only if $[A]B$ and $[B]A$ are comparable. In this case
\begin{align*}
A\wedge B=\min\{[A]B,[B]A\}.
\end{align*}
\end{theorem}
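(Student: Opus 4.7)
The argument naturally splits into two implications.

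For the sufficiency: assume without loss of generality that $[A]B \leq [B]A$. Then $[A]B \leq B$ by Theorem \ref{T: LDT}, and $[A]B \leq [B]A \leq A$ by hypothesis, so $[A]B$ is a common lower bound of $A$ and $B$. For maximality, any $C \in \bph$ with $C \leq A$ and $C \leq B$ satisfies $C \leq 1 \cdot A$, hence $C \ll A$; the maximality clause of Theorem \ref{T: LDT} then gives $C \leq [A]B$. Thus $A \wedge B = [A]B = \min\{[A]B, [B]A\}$.

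For the necessity: set $D := A \wedge B$. The maximality argument just given yields $D \leq [A]B$ and $D \leq [B]A$. Conversely, since $[A]B \leq B$ and $[B]A \leq A$, every $C \in \bph$ with $C \leq [A]B$ and $C \leq [B]A$ is a common lower bound of $A, B$, and hence $C \leq D$. So $D$ is simultaneously the infimum of $\{A, B\}$ and of $\{[A]B, [B]A\}$ in $\bph$. A singularity property then emerges from $D$'s minimality: if $T \in \bph$ with $T \leq A - D$ and $T \leq B - D$, then $T + D \leq A, B$, forcing $T + D \leq D$ and hence $T = 0$. Thus $(A - D) \perp (B - D)$, and the positive differences $D_1 := [A]B - D \leq B - D$ and $D_2 := [B]A - D \leq A - D$ inherit $D_1 \perp D_2$, $D_1 \perp (A - D)$, and $D_2 \perp (B - D)$.

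The main obstacle is to combine these singularity relations with the absolute continuities $D_1 \leq [A]B \ll A$ and $D_2 \leq [B]A \ll B$ to conclude that one of $D_1, D_2$ vanishes, which is precisely the comparability of $[A]B$ and $[B]A$. The plan is to transport the problem to the auxiliary Hilbert space $\hila$ via the factorization from \cite{Tarcsay_lebdec}: under this transport $[A]B$ corresponds to an orthogonal projection and $[B]A$ to a positive contraction, and the existence of their infimum in $\bph$ pulls back to the existence of an infimum of the transported operators in $\mathbf{B}_+(\hila)$. Kadison's anti-lattice theorem then forces these transported operators to be comparable, which lifts back to the comparability of $[A]B$ and $[B]A$ in $\bph$. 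The technical bulk lies in verifying that this transport is order-preserving on the subclass of operators absolutely continuous with respect to $A$—essentially the content of the factorization theorem developed in the next section.
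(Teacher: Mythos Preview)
The paper does not actually prove this theorem: immediately before the statement it attributes the result to Ando, \cite[Theorem 4]{andoinf}, and no argument is given. So there is no ``paper's proof'' to compare against; your proposal must be assessed on its own.

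Your sufficiency direction is correct and is the standard short argument.

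Your necessity direction has a genuine gap in two places. First, the transport to $\hila$ does not work as you describe. In $\hila$ the map $C\mapsto J_A^{-1}CJ_A^{*-1}$ (loosely speaking) is only defined for operators dominated by a multiple of $A$; it sends $[B]A\leq A$ to a positive contraction---indeed, by \eqref{E:[A]B=JPJ} applied with the roles of $A$ and $B$ swapped, $[B]A$ is the one represented by a projection on $\hila$, not $[A]B$. The operator $[A]B$ is merely $A$-absolutely continuous, not $A$-dominated, and in general corresponds only to an unbounded (closable) positive operator on $\hila$; there is no bounded transported operator to which Kadison's theorem could be applied.

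Second, and more fundamentally, Kadison's anti-lattice theorem concerns the ordered set of \emph{all} self-adjoint operators, not the positive cone. As the paper itself emphasizes after citing \cite{kadison}, ``the situation in the positive cone is much more difficult'': two positive operators can have an infimum in $\bph$ without being comparable (any pair of singular positive operators has infimum $0$). So even if the transport produced two bounded positive operators on $\hila$ whose infimum in $\mathbf{B}_+(\hila)$ exists, Kadison's theorem would not let you conclude comparability. The singularity relations you derived ($D_1\perp D_2$, etc.) are correct and relevant, but bridging from them to ``one of $D_1,D_2$ vanishes'' requires the finer structural input of Ando's original argument in \cite{andoinf}, not an anti-lattice statement.
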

Using the necessary and sufficient condition in Ando's infimum theorem, it is easy to find pairs of operators for which the infimum does not exists. However, it is known that the set of orthogonal projections is a lattice. As a matter of fact, the infimum of two projections $P$ and $Q$ in the projection lattice is the same as the infimum of $P$ and $Q$ in $\bph$, and is equal to $2(P:Q)$. We claim that the generalized shorts of $B$ behave similar as projections. In fact, they are indeed projections acting on an appropriate Hilbert space.
To be more precise and prove that claim we recall the construction from \cite[Theorem 1]{Tarcsay_lebdec}. Let us consider the range space $\ran A$ of $A$ endowed with the inner product 
\begin{align*}
\sipa{Ax}{Ay}=\sip{Ax}{y},\qquad x,y\in\hil.
\end{align*}
Note that $\sip{Ax}{x}=0$ implies $Ax=0$, hence $\sipa{\cdot}{\cdot}$ is an inner product, indeed. Let $\hila$ denote the completion of that pre-Hilbert space. The canonical embedding operator $J_A$ of $\hila$ into $\hil$, is defined by 
\begin{align*}
J_A(Ax)=Ax,\qquad x\in\hil.
\end{align*}
Since it is defined on the dense subspace $\ran A$ of $\hila$, it is continuous according to the Schwarz inequality
\begin{align*}
\|Ax\|^2\leq \|A\|\cdot \sip{Ax}{x},\qquad x\in\hil.
\end{align*}
We continue to write $J_A$ for the norm preserving extension to $\hila$. An immediate calculation shows that the adjoint operator $J_A^*$ is an operator from $\hil$ into $\hila$ satisfying 
\begin{align*}
J_A^*x=Ax,\qquad x\in\hil.
\end{align*}
Putting these two observations together, we obtain the following useful factorization of $A$:
\begin{align*}
J_AJ_A^*=A.
\end{align*}
Since $\ran J_A^*$ is identical with the dense subspace $\ran A\subseteq\hila$, we conclude that $J_A$ is injective.

Let us consider now another positive operator $B$ on $\hil$ with corresponding associated Hilbert space and operator  $\hilb$ and $J_B$, respectively. In order to phrase absolute continuity (or closability, which suits better to this consideration) in terms of these auxiliary Hilbert spaces and factorizations, we introduce the relation $\widehat{B}$ between $\hila$ and $\hilb$ as 
\begin{align*}
\widehat{B}:=\overline{\set{(Ax,Bx)}{x\in \hil}}\subseteq \hila\times\hilb.
\end{align*}
We remark that this relation is not single valued unless $\ker A\subseteq \ker B$. Let us denote by $\mathcal{M}$ the multivalued part of $\widehat{B}$, which is the closed linear subspace  
\begin{align*}
\mathcal{M}:=\set{\xi\in\hilb}{(0,\xi)\in\widehat{B}}.
\end{align*}
Setting $P_\mathcal{M}$ being the orthogonal projection of $\hilb$ onto the subspace $\mathcal{M}$, we obtain the following decomposition of  $B$: 
\begin{align*}
B=J_B(I-P_\mathcal{M})J_B^*+J_BP_\mathcal{M}J_B^*,
\end{align*}
where $J_B(I-P_\mathcal{M})J_B^*$ is closable with respect to $A$, and $J_BPJ_B$ and $A$ are singular (see \cite[Theorem 1]{Tarcsay_lebdec}). Furthermore, $J_B(I-P_\mathcal{M})J_B^*$ is maximal among the set of those positive operators $C\leq B$ which are closable with respect to $A$. 
Since  the properties of being closable and of being absolutely continuous are equivalent by \cite[Theorem 3]{Tarcsay_lebdec}, we  conclude that 
\begin{align}\label{E:[A]B=JPJ}
[A]B=J_B(I-P_\mathcal{M})J_B^*.
\end{align}
Using this factorization, we can characterize the extreme points of operator segments as follows. (For earlier version of this theorem see \cite[Theorem 4.3 and Theorem 4.7]{EL}.) For positive operators $S$ and $T$ let us denote the operator interval spanned by $S$ and $T$ by $[S,T]$, i.e.,
$$[S,T]=\set{A\in\bph}{S\leq A\leq T}.$$ \begin{theorem}\label{T:quasiunit-operator} Let $A,B$ be bounded positive operators in the Hilbert space $\hil$ such that $A\leq B$. The following assertions are equivalent: 
\begin{enumerate}[label=\textup{(\roman*)}, labelindent=\parindent]
\item  $A=[A]B$,
\item there is an orthogonal projection $P\in\projb$ such that $A=J_BPJ_B^*$,
\item $A$ is an extreme point of the interval $[0,B]$,
\item $A$ and $B-A$ are singular.
\end{enumerate}
\end{theorem}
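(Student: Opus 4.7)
The plan is to follow the cyclic chain $(\text{i}) \Rightarrow (\text{ii}) \Rightarrow (\text{iii}) \Rightarrow (\text{iv}) \Rightarrow (\text{i})$, exploiting the factorization $B = J_B J_B^*$ and the identity \eqref{E:[A]B=JPJ} recalled just before the statement. The implication $(\text{i}) \Rightarrow (\text{ii})$ is immediate: if $A = [A]B$, then the orthogonal projection $P := I - P_\mathcal{M} \in \projb$ satisfies $A = J_B P J_B^*$.

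For $(\text{ii}) \Rightarrow (\text{iii})$ the key step is to establish the affine bijection $\Phi : D \mapsto J_B D J_B^*$ from the effect algebra $[0, I_{\hilb}]$ onto the operator interval $[0, B]$. Injectivity follows from density of $\ran J_B^*$ in $\hilb$ combined with injectivity of $J_B$. Surjectivity uses a standard sesquilinear form construction: for $0 \leq C \leq B$, the assignment $q(J_B^* x, J_B^* y) := \sip{Cx}{y}$ is well defined (since $J_B^* x = 0$ forces $Bx = 0$ and hence $Cx = 0$) and bounded by one on $\ran J_B^* \times \ran J_B^*$, so its continuous extension to $\hilb \times \hilb$ is represented by a unique $D \in [0, I_{\hilb}]$ with $\Phi(D) = C$. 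Since an affine bijection of convex sets carries extreme points to extreme points, and projections are precisely the extreme points of the effect algebra $[0, I_{\hilb}]$, the operator $A = J_B P J_B^*$ is extreme in $[0, B]$.

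The implication $(\text{iii}) \Rightarrow (\text{iv})$ is a direct convex combination argument: if $0 \leq C$, $C \leq A$, and $C \leq B - A$, then both $A + C$ and $A - C$ lie in $[0, B]$ and $A$ is their midpoint, so extremality forces $C = 0$, which is precisely $A \perp (B - A)$. For $(\text{iv}) \Rightarrow (\text{i})$, write $A = \Phi(D) = J_B D J_B^*$, so that $B - A = J_B(I - D) J_B^*$. A change of variables $\eta = J_B^* y$ in the variational formula for parallel addition, together with density of $\ran J_B^*$, yields
\[
A : (B - A) = J_B \bigl(D : (I - D)\bigr) J_B^*,
\]
while the Anderson--Duffin formula (applicable since $D + (I - D) = I$ is invertible) evaluates $D : (I - D) = D(I - D)$. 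The hypothesis, the equivalence $A \perp (B - A) \Leftrightarrow A : (B - A) = 0$, and injectivity of $\Phi$ then force $D(I - D) = 0$, so $D$ is an orthogonal projection.

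It remains to identify $\mathcal{M}$ with $\ker D$. The identity $\sip{A x_n}{x_n} = \|D J_B^* x_n\|_{\hilb}^2$ shows that $A x_n \to 0$ in $\hila$ amounts to $D J_B^* x_n \to 0$ in $\hilb$; combined with $J_B^* x_n \to \eta$ and continuity of $D$, this gives $D \eta = 0$, while density of $\ran J_B^*$ secures the converse. Hence $\mathcal{M} = \ker D$, $P_\mathcal{M} = I - D$, and \eqref{E:[A]B=JPJ} delivers $[A]B = J_B D J_B^* = A$. The main obstacle is arguably the surjectivity of $\Phi$ -- confirming that the form $q$ is well defined on $\ran J_B^* \times \ran J_B^*$ -- and the related identification $\mathcal{M} = \ker D$ at the end, both of which hinge on density of $\ran J_B^*$ in $\hilb$ and injectivity of $J_B$.
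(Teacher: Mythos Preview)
Your proof is correct and follows essentially the same skeleton as the paper --- transfer everything to $\hilb$ via $D\mapsto J_BDJ_B^*$ --- but two of your links in the chain are handled differently and are worth contrasting. For $(\text{iii})\Rightarrow(\text{iv})$ you use the clean midpoint trick $A=\tfrac12(A+C)+\tfrac12(A-C)$ together with the order-theoretic definition of singularity, whereas the paper goes $(\text{ii})\Rightarrow(\text{iv})$ directly via the range criterion $\ran A^{1/2}\cap\ran(B-A)^{1/2}=\{0\}$ and injectivity of $J_B$. For $(\text{iv})\Rightarrow(\text{i})$ you compute $A:(B-A)=J_B\bigl(D:(I-D)\bigr)J_B^*$ and invoke Anderson--Duffin to get $D(I-D)=0$, then identify $\mathcal{M}=\ker D$ by a direct density argument; the paper instead observes $Q(I-Q)\in[0,Q]\cap[0,I-Q]$ to force $Q=Q^2$, and then pins down $Q=I-P_{\mathcal{M}}$ by appealing to the maximality of the Lebesgue decomposition together with a contradiction via closability. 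Your route is arguably more self-contained at that last step (it does not call on maximality of $[A]B$), at the price of importing the parallel-sum transfer and the Anderson--Duffin identity; the paper's argument stays closer to the Lebesgue-decomposition machinery it has just developed.
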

\begin{proof}
Implication (i)$\Rightarrow$(ii) is clear by \eqref{E:[A]B=JPJ}. To  prove (ii)$\Rightarrow$(iii) we show for a given $P\in\projb$ that $A=J_BPJ_B$ is an extreme point of $[0,B]$. To this end assume that $A=\alpha_1A_1+\alpha_2A_2$ for some $0\leq A_i\leq B$ and $\alpha_i>0$, $\alpha_1+\alpha_2=1$. Since the correspondences 
\begin{align*}
J_B^*x\mapsto A_i^{1/2}x,\qquad x\in \hil,
\end{align*}
are linear contractions, it is readily seen  that there exist two positive contractions $\widetilde{A}_1,\widetilde{A}_2$ on $\hilb$ such that $A_i=J_B\widetilde{A}_iJ_B^*$. Since $J_B$ is injective and $J_B^*$ has dense range, we conclude that $P=\alpha_1 \widetilde{A}_1+\alpha_2\widetilde{A}_2$ and therefore that $ \widetilde{A}_1=\widetilde{A}_2=P$ because $P$ is an extreme point of $[0,I]$. A very similar argument shows that (iii) implies (ii). Next we are going to prove (ii)$\Rightarrow$(iv). Suppose $A=J_BPJ_B^*$ for some orthogonal projection $P$ of $\hilb$, then $B-A=J_B(I-P)J_B^*$. By injectivity of $J_B$ we conclude that 
\begin{align*}
\ran A^{1/2}\cap\ran (B-A)^{1/2}=\ran J_BP\cap \ran J_B(I-P)=\{0\},
\end{align*}
hence $A$ and $B-A$ are singular. Finally, to prove that (iv) implies (i), assume that $A$ and $B-A$ are singular and let $Q$ be a positive contraction on $\hilb$ such that $A=J_BQJ_B^*$. It is clear  that $Q$ and $I-Q$ are singular too. From the inequalities $Q^2\leq Q$ and $(I-Q)^2\leq I-Q$ it follows that 
\begin{align*}
Q(I-Q)\in[0,Q]\cap[0,I-Q],
\end{align*}
hence $Q=Q^2$ by singularity. That means that $Q$ is an orthogonal projection. The proof will be done if we show that $Q=I-P_{\mathcal{M}}$. It is clear that $A+(B-A)$ is a Lebesgue decomposition of $B$ with respect to $A$, hence from the maximality of $J_B(I-P_{\mathcal{M}})J_B^*$ we infer that $A\leq J_B(I-P_{\mathcal{M}})J_B^*$, and hence $Q\leq I-P_{\mathcal{M}}$. If $Q\neq I-P_{\mathcal{M}}$, then we can find a non-zero vector  $\xi\in\ran (I-P_{\mathcal{M}})\cap[\ran Q]^{\perp}$. By density, there is a sequence $(x_n)_{n\in\dupN}$ from $\hil$ such that $J_B^*x_n\to\xi$. Then we have 
\begin{align*}
\sip{Ax_n}{x_n}=\sipb{QJ_B^*x_n}{J_B^*x_n}\to0,
\end{align*}
and similarly,
\begin{align*}
\sip{[A]B(x_n-x_m)}{x_n-x_m}=\sipb{(I-P_{\mathcal{M}})J_B^*(x_n-x_m)}{J_B^*(x_n-x_m)}\to0,
\end{align*}
but 
\begin{align*}
\sip{[A]Bx_n}{x_n}=\sipb{(I-P_{\mathcal{M}})J_B^*x_n}{J_B^*x_n}\to \sipb{\xi}{\xi}\neq0,
\end{align*}
which contradicts the absolute continuity of $[A]B$. The proof is therefore complete. 
\end{proof}
\begin{corollary}\label{corollary [A]B quasi-unit} 
Let $B\in\bph$ be a positive operator. Then $[A]B$ is an extreme point of $[0,B]$ for all $A\in\bph$. Moreover, $A+[A]B$ is an extreme point of $[0,A+B]$.

\end{corollary}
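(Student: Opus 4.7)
The first assertion is immediate from Theorem \ref{T:quasiunit-operator}: formula \eqref{E:[A]B=JPJ} realises $[A]B$ in the form $J_B Q J_B^*$ with $Q=I-P_{\mathcal{M}}$ an orthogonal projection on $\hil_B$, so condition (ii) of that theorem is met and (iii) yields the extremality of $[A]B$ in $[0,B]$.

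For the second assertion, my plan is to verify criterion (iv) of Theorem \ref{T:quasiunit-operator}, applied with $B$ replaced by $A+B$ and the candidate extreme point being $A+[A]B$. Since the complementary operator in $[0,A+B]$ is
\begin{equation*}
(A+B)-(A+[A]B)=B-[A]B,
\end{equation*}
the task reduces to showing that $A+[A]B$ and $B-[A]B$ are mutually singular. The two key Lebesgue-theoretic inputs come from Theorem \ref{T: LDT}: $[A]B\ll A$ and $B-[A]B\perp A$. Since trivially $A\ll A$ and absolute continuity is preserved under addition, $A+[A]B\ll A$. Now fix $C\in\bph$ with $C\leq A+[A]B$ and $C\leq B-[A]B$. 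Singularity is hereditary directly from its definition (any $D\leq C\leq B-[A]B$ with $D\leq A$ must vanish), so $C\perp A$; and absolute continuity is an order ideal, a classical fact from Lebesgue-type decomposition theory (cf.\ \cite{Ando-LD}), so $C\ll A$. The only positive operator that is simultaneously $A$-absolutely continuous and $A$-singular is $0$, which forces $C=0$ and completes the verification of (iv).

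The main obstacle is the appeal to two monotonicity properties of $\ll A$ -- closure under addition and the order-ideal property -- which should be flagged carefully rather than used tacitly. Both are classical and can be derived in short order from the characterisation $B\ll A \iff B=\sup_n B:(nA)$ together with the monotonicity of parallel addition, or quoted directly from \cite{Ando-LD} or \cite{Tarcsay_lebdec}. An alternative route, avoiding criterion (iv), is to first show the identity $[A](A+B)=A+[A]B$ -- proved by the same hereditariness argument applied to the positive operator $[A](A+B)-(A+[A]B)$ -- and then to deduce the second assertion from the first part of the corollary applied to the pair $(A,A+B)$.
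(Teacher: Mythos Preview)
Your argument for the first assertion is correct and identical to the paper's.  For the second assertion you and the paper both aim at criterion (iv) of Theorem \ref{T:quasiunit-operator}, i.e.\ at showing that $A+[A]B$ and $B-[A]B$ are singular.  The difference is in \emph{how} this singularity is established, and your route contains a genuine gap.

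The step ``$C\leq A+[A]B$ with $A+[A]B\ll A$, hence $C\ll A$'' appeals to the claim that $\{\,C\in\bph: C\ll A\,\}$ is an order ideal.  This is \emph{false}.  On $\hil=L^2(0,1)$ take $A$ to be multiplication by $t$; then $I\ll A$ because any $L^2$-Cauchy sequence $(x_n)$ with $\int t\lvert x_n(t)\rvert^2\,dt\to0$ must converge to $0$.  Yet with $f(t)=t^{-1/4}\in L^2$ the rank-one projection $C$ onto $\mathbb C f$ satisfies $0\leq C\leq I$ but $C\not\ll A$: choosing $g_n=\sqrt{n}\,\mathbf 1_{(0,1/n)}\,t^{-1/4}$ one gets $\langle g_n,f\rangle=2$ for all $n$, so $(g_n)$ is $C$-Cauchy with $C[g_n]\equiv 2$, while $\sip{Ag_n}{g_n}=\tfrac{2}{3}n^{-1/2}\to0$.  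Thus the downward hereditariness you invoke (and attribute to \cite{Ando-LD}) does not hold, and the same objection applies to your alternative route via $[A](A+B)=A+[A]B$, whose proposed proof uses the identical step.

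The paper avoids this by computing the quadratic form of the parallel sum $(B-[A]B):(A+[A]B)$ directly: using $B-[A]B=J_BP_{\mathcal M}J_B^*$, $[A]B=J_B(I-P_{\mathcal M})J_B^*$ and the factorisation of $A$ through $\hila$, the infimum defining this parallel sum at $x$ becomes the squared distance in $\hila\times\hilb$ from the vector $(Ax,(I-P_{\mathcal M})Bx)$ to the dense subspace $\ran A\times\ran B$, hence vanishes.  This is the missing idea in your argument.
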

\begin{proof} The first statement follows immediately from the previous theorem, because $[A]B$ is represented by a projection according to \eqref{E:[A]B=JPJ}. To prove the second assertion it is enough to show that $B-[A]B$ and $A+[A]B$ are  singular. We recall that singularity of positive operators is characterized by having zero parallel sum. Let us therefore calculate the quadratic form of the parallel sum of $B-[A]B$ and $A+[A]B$
\begin{align*}
&\sip{(B-[A]B):(A+[A]B)x}{x}=\\
&=\inf_{y\in\hil} \Big\{\sip{J_B(I-P_{\mathcal{M}})J_B^*(x-y)}{x-y}+\sip{A(x-y)}{x-y}+ \sip{J_BP_{\mathcal{M}}J_B^*y}{y} \Big\}\\
&=\inf_{y\in\hil} \Big\{ \sipa{A(x-y)}{A(x-y)} + \sipb{(I-P_{\mathcal{M}})(Bx-By)}{Bx-By}+\sipb{P_{\mathcal{M}}(By)}{By}\Big\}\\
&=\inf_{y\in\hil} \Big\{ \sipa{A(x-y)}{A(x-y)} +\sipb{(I-P_\mathcal{M})(Bx)-By}{(I-P_\mathcal{M})(Bx)-By} \Big\}
\end{align*}
where the last term is just the squared distance of  vector $(Ax,(I-P)Bx)$ from the dense set $\ran A\times \ran B$ in $\hila\times\hilb$, hence is equal to zero. 
\end{proof}

An operator $A\in[0,B]$ that satisfies the equivalent conditions of Theorem \ref{T:quasiunit-operator} is called a $B$-quasi-unit and the set of $B$-quasi-units is denoted by $\quasi(B)$. It follows immediately from Theorem \ref{T:quasiunit-operator} and Corollary \ref{corollary [A]B quasi-unit} that
\begin{align*}
\quasi(B)=\ex [0,B]=\set{[A]B}{A\in\bph}=\set{J_BPJ_B^*}{P\in \projb}.
\end{align*}
\begin{theorem}\label{T: op}
Let $B$ be a positive operator on the Hilbert space $\hil$. The  mapping 
\begin{equation}\label{E:PhiHB}
\Psi_B(P):=J_BPJ_B^*
\end{equation}
establishes an order preserving bijection between $\projb$ of $\quasi(B)$.
\end{theorem}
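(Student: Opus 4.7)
The plan is to verify three things in turn: that $\Psi_B$ actually maps into $\quasi(B)$ and is surjective, that it is injective, and that it respects the order.

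Surjectivity is essentially already proved. Indeed, Theorem~\ref{T:quasiunit-operator} together with Corollary~\ref{corollary [A]B quasi-unit} yields the description
\begin{align*}
\quasi(B)=\set{J_BPJ_B^*}{P\in\projb},
\end{align*}
so $\Psi_B(\projb)=\quasi(B)$. In particular $\Psi_B$ is well-defined with codomain $\quasi(B)$ and is onto.

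The only genuinely new point is injectivity, and this is where the two basic properties of the factorization $B=J_BJ_B^*$ recalled above must be combined. Suppose $\Psi_B(P)=\Psi_B(Q)$ for some $P,Q\in\projb$. Then $J_B(P-Q)J_B^*=0$, and since $J_B$ is injective (as was observed after the definition of $J_B$, because $\ran J_B^*$ is dense in $\hilb$), this forces $(P-Q)J_B^*x=0$ for every $x\in\hil$. Now density of $\ran J_B^*$ in $\hilb$, together with boundedness of $P-Q$, gives $P=Q$. I expect this to be the main (and essentially only) obstacle — it is really a two-step argument, using both injectivity of $J_B$ and density of $\ran J_B^*$, properties that sit on opposite sides of the factorization.

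Order preservation, in both directions, is then a direct computation with the factorization. If $P\leq Q$ in $\projb$, then for every $x\in\hil$,
\begin{align*}
\sip{\Psi_B(P)x}{x}=\sipb{PJ_B^*x}{J_B^*x}\leq\sipb{QJ_B^*x}{J_B^*x}=\sip{\Psi_B(Q)x}{x},
\end{align*}
so $\Psi_B(P)\leq\Psi_B(Q)$. Conversely, the same identity combined with the density of $\ran J_B^*$ in $\hilb$ shows that $\Psi_B(P)\leq\Psi_B(Q)$ implies $\sipb{P\xi}{\xi}\leq\sipb{Q\xi}{\xi}$ for all $\xi\in\hilb$, hence $P\leq Q$. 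Thus $\Psi_B$ is in fact an order isomorphism between $\projb$ and $\quasi(B)$, which yields the claimed statement.
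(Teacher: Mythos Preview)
Your proof is correct and follows essentially the same approach as the paper: surjectivity from Theorem~\ref{T:quasiunit-operator}, injectivity from the combination of $J_B$ being one-to-one and $\ran J_B^*$ being dense, and order preservation in both directions from the same density argument. The paper's proof is simply a more terse version of what you wrote.
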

\begin{proof}
According to Theorem \ref{T:quasiunit-operator}, $\Psi_B$ is a surjection and since $J_B^*$ has dense range and $J_B$ is one-to-one, $\Psi_B$ is also an injection. It is immediate that $\Psi_B$ is order preserving. That $\Psi_B^{-1}$ is order preserving too follows again from the fact that the range of $J_B^*$ is dense.
\end{proof}
As a straightforward consequence of the fact that $\projb$ is a lattice, we obtain \cite[Theorem 5.4]{EL} as an easy corollary of Theorem \ref{T: op}.
\begin{corollary}\label{Q(B) lattice}
The partially ordered set $(\quasi(B),\leq)$ is a lattice.
\end{corollary}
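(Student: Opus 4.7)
The proof is essentially immediate once the order-theoretic content of Theorem~\ref{T: op} is unpacked. The plan is to exploit the fact that $\Psi_B$ is an order isomorphism between $\projb$ and $\quasi(B)$, and then transport the lattice operations of $\projb$ across this bijection.

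First I would recall the well-known fact that the set $\projb$ of orthogonal projections on the Hilbert space $\hilb$ is a complete lattice with respect to the usual order of self-adjoint operators: for $P,Q\in\projb$, the infimum $P\wedge Q$ is the projection onto $\ran P\cap\ran Q$ and the supremum $P\vee Q$ is the projection onto $\overline{\ran P+\ran Q}$. No proof of this standard statement needs to be reproduced.

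Next I would invoke Theorem~\ref{T: op}: the map $\Psi_B(P)=J_BPJ_B^*$ is a bijection $\projb\to\quasi(B)$, and both $\Psi_B$ and $\Psi_B^{-1}$ are order preserving. Given $A_1,A_2\in\quasi(B)$, set $P_i:=\Psi_B^{-1}(A_i)\in\projb$ and define
\begin{equation*}
A_1\wedge A_2:=\Psi_B(P_1\wedge P_2),\qquad A_1\vee A_2:=\Psi_B(P_1\vee P_2).
\end{equation*}
A short verification shows these are the greatest lower bound and least upper bound of $A_1,A_2$ inside $\quasi(B)$: since $\Psi_B$ is order preserving, $\Psi_B(P_1\wedge P_2)$ lies below both $A_1$ and $A_2$; conversely if $C\in\quasi(B)$ satisfies $C\leq A_1,A_2$, then $\Psi_B^{-1}(C)\leq P_1,P_2$ by the order preserving property of $\Psi_B^{-1}$, hence $\Psi_B^{-1}(C)\leq P_1\wedge P_2$, and applying $\Psi_B$ again yields $C\leq A_1\wedge A_2$. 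The argument for the supremum is symmetric.

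There is essentially no obstacle here: the content is entirely that an order isomorphism between posets transfers a lattice structure. The only conceptual subtlety is that the infimum and supremum are computed \emph{within} $\quasi(B)$, not within $\bph$; the corollary makes no claim that $A_1\wedge A_2$ coincides with the infimum of $A_1,A_2$ in $\bph$ (which, by Theorem~\ref{T: inf}, might not even exist).
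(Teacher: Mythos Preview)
Your proposal is correct and follows exactly the same approach as the paper, which simply states that the corollary is ``a straightforward consequence of the fact that $\projb$ is a lattice'' via Theorem~\ref{T: op}. You have merely spelled out in detail the transport-of-structure argument that the paper leaves implicit.
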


\section{Nonnegative sesquilinear forms}\label{S: forms}
The aim of this section is to carry over the previous results  for nonnegative sesquilinear forms. Namely, to determine the extreme points of order segments and to decide whether or not the greatest lower bound of two nonnegative sesquilinear forms exists. To begin with, let us recall some basic notions. A nonnegative sesquilinear form on the complex linear space $\D$ is a mapping $\tf:\D\times\D\to\mathbb{C}$, which is linear in its first, anti-linear in its second argument, and the corresponding quadratic form is nonnegative (that is, $\tf[x]:=\tf(x,x)\geq0$ for all $x\in\D$). Since all the sesquilinear forms in this paper are assumed to be nonnegative, we will say shortly \emph{form}. Since the square root of the quadratic form is a seminorm, the set $\kert=\set{x\in\Xx}{\tf[x]=0}$ is a linear subspace of $\Xx$. The quotient space $\Xx/_{\kert}$ endowed with $\sip{x+\kert}{y+\kert}_{\tf}:=\tf(x,y)$ is a pre-Hilbert space. Let us denote its completion by $\hilt$. The cone of forms will be denoted by $\fpx$. We write $\tf\leq\wf$ if $\tf[x]\leq\wf[x]$ holds for all $x\in\D$. If $\tf$ and $\wf$ are forms such that $\tf\leq\wf$, then the interval (or segment) spanned by $\tf$ and $\wf$ is
$$[\tf,\wf]=\set{\ssf\in\fpx}{\tf\leq\ssf\leq\wf}.$$
A sequence $(\tf_n)_{\nen}$ of forms is monotone increasing if $m\leq n$ implies that $\tf_m\leq\tf_n$. The pointwise limit of an upper bounded monotone increasing sequence ($m\leq n$ implies $\tf_m\leq\tf_n\leq\ssf$ for some form $\ssf$) is a form, and $\tf=\lim\limits_{n\to\infty}\tf_n=\sup\limits_{n\in\mathbb{N}}\tf_n\leq\ssf$. The form $\tf$ is $\wf$-dominated if there exists an $\alpha\geq0$ such that $\tf\leq \alpha\wf$. The form $\tf$ is $\wf$-almost dominated if $\tf$ is the (order) supremum of some sequence $\seq{\tf}$ of $\wf$-dominated forms. The forms $\tf$ and $\wf$ are singular if for each form $\ssf$ on $\D$ the inequalities $\ssf\leq\tf$ and $\ssf\leq\wf$ imply that $\ssf=\mathfrak{0}$ (where $\nullf$ denotes the zero form). A decomposition of $\tf$ into $\wf$-almost dominated and $\wf$-singular parts is called $\wf$-Lebesgue decomposition. Exactly as in the bounded positive operator case, parallel addition plays a crucial role here. This notion was introduced by Hassi, Sebesty\'en, and de Snoo in their fundamental paper \cite{lebdec}. The definition and the properties of parallel addition are given in the following proposition  (for the details see \cite[Proposition 2.2 and Lemma 2.3]{lebdec}).

\begin{proposition}\label{parsumproperty}
For $\tf,\wf\in\fpx$ the quadratic form of the parallel sum $\tf:\wf$ is
    \begin{align}\label{ps def}
        (\tf:\wf)[x]:=\inf\limits_{y\in\D}\big\{\tf[x-y]+\wf[y]\big\},\qquad x\in\D.
    \end{align}
Furthermore, parallel addition satisfies the following properties
\begin{enumerate}[label=\textup{(\roman*)}, labelindent=\parindent]
\begin{multicols}{2}
\item $\tf:\wf=\wf:\tf\leq\tf$,
\item $(\tf:\wf):\ssf=\tf:(\wf:\ssf)$,
\item $\tf\leq\ssf~\Rightarrow~\tf:\wf\leq\ssf:\wf$,
\item $\lambda\tf:\mu\tf=\textstyle{\frac{\mu\lambda}{\mu+\lambda}}\tf$\quad$(\lambda,\mu>0)$.
\end{multicols}
\end{enumerate}
\end{proposition}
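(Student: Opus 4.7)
The plan is to verify first that the right-hand side of \eqref{ps def} is the quadratic form of some nonnegative sesquilinear form on $\D$, and then to read off properties (i)--(iv) from the infimum formula.

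For the first step, the cleanest route is to embed the problem in a Hilbert space. Consider the orthogonal direct sum $\hilt\oplus\hilw$ together with the linear maps
\[
\iota_{\tf}(x):=(x+\kert,\,0),\qquad \Delta(y):=(y+\kert,\,-y+\kerw),\qquad x,y\in\D.
\]
A direct computation shows $\|\iota_{\tf}(x)-\Delta(y)\|^{2}=\tf[x-y]+\wf[y]$, so the right-hand side of \eqref{ps def} equals $\dist(\iota_{\tf}(x),\mathcal{N})^{2}$, where $\mathcal{N}$ is the closure of $\Delta(\D)$ in $\hilt\oplus\hilw$. Writing $P_{\mathcal{N}}$ for the orthogonal projection onto $\mathcal{N}$, this common value becomes $\|(I-P_{\mathcal{N}})\iota_{\tf}(x)\|^{2}$, that is, the squared norm of a linear function of $x$. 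Consequently
\[
(\tf:\wf)(x,z):=\bigl\langle (I-P_{\mathcal{N}})\iota_{\tf}(x),\,(I-P_{\mathcal{N}})\iota_{\tf}(z)\bigr\rangle_{\hilt\oplus\hilw}
\]
defines a nonnegative sesquilinear form on $\D$ whose quadratic form is precisely the prescribed infimum. This is the only step that needs genuine work; the parallelogram identity for $(\tf:\wf)[\,\cdot\,]$ is packaged into orthogonal projection onto the closed subspace $\mathcal{N}$.

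The four listed properties now fall out of \eqref{ps def}. For (i), the substitution $y\mapsto x-y$ interchanges the roles of $\tf$ and $\wf$ in the infimum, while choosing $y=0$ yields $\tf:\wf\leq\tf$. For (iii), the termwise inequality $\tf[x-y]+\wf[y]\leq\ssf[x-y]+\wf[y]$ gives the claim after taking infima. For (ii), both $((\tf:\wf):\ssf)[x]$ and $(\tf:(\wf:\ssf))[x]$ become, via \eqref{ps def}, double infima that coincide after a linear relabelling of the auxiliary variables with the common expression
\[
\inf_{y,z\in\D}\bigl\{\tf[x-y-z]+\wf[y]+\ssf[z]\bigr\}.
\]
Finally, for (iv) with $\lambda,\mu>0$, the expression $\lambda\tf[x-y]+\mu\tf[y]$ may be viewed as a quadratic polynomial in $y+\kert\in\hilt$; completing the square rewrites it as $(\lambda+\mu)\big\|(y+\kert)-\tfrac{\lambda}{\lambda+\mu}(x+\kert)\big\|_{\tf}^{2}+\tfrac{\lambda\mu}{\lambda+\mu}\tf[x]$, and letting $y+\kert$ approach the optimal vector yields the claimed value $\tfrac{\lambda\mu}{\lambda+\mu}\tf[x]$.

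The main obstacle is the first paragraph --- showing that a pointwise infimum of quadratic forms is itself a quadratic form. Once the Hilbert-space realization above is in place, every remaining property is routine bookkeeping with the infimum expression.
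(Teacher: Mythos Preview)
Your argument is correct. The Hilbert-space realisation via $\hilt\oplus\hilw$ and the projection onto the closure of the diagonal $\Delta(\D)$ is a clean way to guarantee that the infimum in \eqref{ps def} is the quadratic form of a genuine nonnegative sesquilinear form; the remaining verifications of (i)--(iv) are unobjectionable. One cosmetic remark on (iv): since $\D$ is a linear space, the optimal vector $\tfrac{\lambda}{\lambda+\mu}x$ already lies in $\D$, so the infimum is attained and no limiting argument is needed.

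As for the comparison: the paper does not prove this proposition at all. It is quoted verbatim from \cite[Proposition 2.2 and Lemma 2.3]{lebdec} and the reader is referred there ``for the details''. Your write-up therefore supplies what the paper omits. The approach in the cited source is in the same spirit---one works in the auxiliary Hilbert space $\hiltw$ associated with $\tf+\wf$ and represents $\tf:\wf$ through a bounded positive operator there---so your direct-sum variant is a close cousin rather than a genuinely different route. Either realisation buys exactly the same thing: once the infimum is identified as the squared norm of a linear map into a Hilbert space, sesquilinearity is automatic, and (i)--(iv) become elementary.
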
 
Since parallel addition is monotone in both variables according to Proposition \ref{parsumproperty} (i) and (iii), the sequence $(\tf:n\wf)_{n\in\mathbb{N}}$ is monotone increasing and is bounded above by $\tf$. Thus the equality $\dwt:=\sup\limits_{n\in\mathbb{N}}\tf:n\wf$ defines a form which satisfies
\begin{align*}
\tf:n\wf\leq\dwt\leq\tf\qquad\mbox{for all}~n\in\mathbb{N}.
\end{align*} 
It is obvious that this form will play the role of the generalized short $[A]B$ in this setting. In analogy with the operator case, almost domination and singularity can be characterized as follows: $\tf$ is $\wf$-almost dominated if and only if $\dwt=\tf$ (see \cite[Theorem 2.6]{lebdec}), $\tf$ and $\wf$ are singular if $\tf:\wf=\nullf$ (see \cite[Proposition 2.10]{lebdec}).

In the next theorem we are going to offer a construction (see also \cite{titkosinf}) which allows us to carry over the results of Section \ref{S: operators} for nonnegative sesquilinear forms. As it will be discussed later in Section \ref{S: paps}, the following characterization of quasi-units could be obtained by using only the properties of parallel addition and subtraction, or by using the notion of complement as in \cite{cof}. Recall that if $\hil$ is a Hilbert space then there is a bijective isometric correspondence between operators in $\bh$ and bounded
sesquilinear forms on $\hil$, given by $A\mapsto \tf_A$, where $A\in \bh$ and
\begin{align*}
\tf_A(x,y)=\sip{Ax}{y}\qquad x,y\in \hil.
\end{align*}
\begin{theorem}\label{T: S3}
Let $\tf\in\fpx$ be an arbitrary fixed form. Then there is a Hilbert space $\hilt$ such that the form interval $[\nullf,\tf]$ is order isomorphic with the operator interval $[0_{\tf},I_{\tf}]$ of $\mathbf{B}(\hilt)$. In particular, the following statements are equivalent for any form $\wf\in[\nullf,\tf]$:
\begin{enumerate}[label=\textup{(\roman*)}, labelindent=\parindent]
\item $\wf$ is a $\tf$-quasi-unit, i.e., $\dwt=\wf$,
\item $\wf$ is an extreme point of $[\nullf,\tf]$,
\item $\wf$ is a disjoint part of $\tf$.
\end{enumerate}
Furthermore, the infimum of $\uf$ and $\vf$ exists if and only if $\mathbf{D}_{\uf}\vf$ and $\mathbf{D}_{\vf}\uf$ are comparable.
\end{theorem}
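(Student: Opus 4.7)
The plan is to reduce the form-theoretic assertions to the operator-theoretic results of Section~\ref{S: operators} through a canonical order isomorphism between $[\nullf,\tf]$ and the operator interval $[0_\tf,I_\tf]\subseteq\mathbf{B}(\hilt)$. Given $\tf\in\fpx$, let $\hilt$ be the Hilbert space completion of $\Xx/\kert$ under $\sipt{x+\kert}{y+\kert}=\tf(x,y)$. For $\wf\in[\nullf,\tf]$ the inequality $\wf\leq\tf$ forces $\kert\subseteq\ker\wf$, so $\wf$ descends to a sesquilinear form on the quotient, and the Cauchy--Schwarz bound $|\wf(x,y)|^{2}\leq\wf[x]\wf[y]\leq\tf[x]\tf[y]$ shows this descended form is bounded with respect to $\sipt{\cdot}{\cdot}$. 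Extending by continuity to $\hilt$ and invoking the form-operator correspondence recalled before the theorem yields a positive contraction $\Phi_\tf(\wf)\in[0_\tf,I_\tf]$. Conversely, each $T\in[0_\tf,I_\tf]$ produces a form $\wf_T(x,y):=\sipt{T(x+\kert)}{y+\kert}$ lying in $[\nullf,\tf]$, and the two constructions are mutually inverse; density of $\Xx/\kert$ in $\hilt$ makes both directions order-preserving.

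The crux of the transfer is to verify that $\Phi_\tf$ intertwines every auxiliary operation used in the statement. Parallel addition of forms, defined by the pointwise infimum formula~\eqref{ps def}, corresponds under $\Phi_\tf$ to operator parallel addition on $\hilt$, since the infimum over $y\in\Xx$ agrees with the one over $y\in\hilt$ by density and continuity of the relevant quadratic forms. Monotone suprema bounded above by $\tf$ are likewise preserved, whence $\Phi_\tf(\dwt)=[\Phi_\tf(\wf)]I_\tf$. Singularity translates to singularity ($\wf$ and $\tf-\wf$ are singular precisely when $\Phi_\tf(\wf)$ and $I_\tf-\Phi_\tf(\wf)$ are singular), while extreme points map to extreme points because $\Phi_\tf$ is an affine bijection.

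Armed with this dictionary, the equivalence of (i), (ii), and (iii) is a direct translation of Theorem~\ref{T:quasiunit-operator} applied with $B=I_\tf$ and $A=\Phi_\tf(\wf)$: form-condition (i) becomes the operator condition $A=[A]B$, (ii) becomes extremality in $[0,B]$, and (iii) becomes the singularity of $A$ and $B-A$. For the infimum assertion, set $\tf:=\uf+\vf$; any common lower bound of $\uf$ and $\vf$ in $\fpx$ is automatically $\leq\tf$, so the existence of $\uf\wedge\vf$ in $\fpx$ is equivalent to its existence in $[\nullf,\tf]$. Via $\Phi_\tf$ this reduces to the existence of $\Phi_\tf(\uf)\wedge\Phi_\tf(\vf)$ in $\mathbf{B}_+(\hilt)$, and Ando's Theorem~\ref{T: inf} then translates verbatim into the comparability of $\mathbf{D}_\uf\vf$ and $\mathbf{D}_\vf\uf$.

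The main obstacle I anticipate is establishing that $\Phi_\tf$ genuinely intertwines parallel addition and the monotone suprema defining $\mathbf{D}_\uf\vf$: these operations live originally on the possibly unwieldy space $\Xx$, and the entire reduction rests on showing they coincide with their operator-theoretic counterparts on the completion $\hilt$. Once the density of $\Xx/\kert$ in $\hilt$ and the monotone continuity of parallel addition are exploited, the remainder of the proof is a clean translation.
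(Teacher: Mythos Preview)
Your proposal is correct and follows essentially the same route as the paper's proof: build the Hilbert space $\hilt$, exhibit an order isomorphism $\Phi_\tf:[\nullf,\tf]\to[0_\tf,I_\tf]$, verify that it preserves parallel sums and monotone suprema (hence the generalized short $\mathbf{D}_\wf$), and then read off (i)$\Leftrightarrow$(ii)$\Leftrightarrow$(iii) from Theorem~\ref{T:quasiunit-operator} and the infimum criterion from Theorem~\ref{T: inf} via $\Phi_{\uf+\vf}$. The only cosmetic difference is that the paper realizes $\Phi_\tf(\wf)$ as $j_{\tf,\wf}^*j_{\tf,\wf}$ through the auxiliary contraction $j_{\tf,\wf}:\hilt\to\hilw$, whereas you invoke the bounded-form/operator correspondence directly; both constructions produce the same positive contraction, and the remaining verification (density argument for the parallel-sum infimum, preservation of bounded monotone suprema) is identical.
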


\begin{proof}
First we are going to prove that such an order isomorphism exists. Let $\wf$ be a form such that $\wf\in[\nullf,\tf]$ and consider the auxiliary Hilbert spaces $\hilt$ and $\hilw$, respectively. The linear operator $j_{\tf,\wf}$ from $\D/_{\ker{\tf}}\subseteq
\hilt$ to $\hilw$ defined by
\begin{align*}
j_{\tf,\wf}(x+\ker\tf)=x+\ker\wf,\qquad x\in\D,
\end{align*}
is a densely defined contraction, hence its
closure (i.e. the second adjoint of $j_{\tf,\wf}$) is a contraction with $\dom j_{\tf,\wf}=\hilt$ such that $\ran j_{\tf,\wf}$ is dense in
$\hilw$. We continue to use the symbol $j_{\tf,\wf}$ for the closure instead of $j_{\tf,\wf}^{**}$. Now, observe that  $j_{\tf,\wf}^{*}j_{\tf,\wf}$ is a  bounded  positive operator on $\hilt$ and satisfies
\begin{align}\label{formop}
\begin{split}
\sipt[\big]{j_{\tf,\wf}^{*}j_{\tf,\wf}(x+\ker\tf)}{y+\ker\tf}&=\sipw[\big]{j_{\tf,\wf}(x+\ker\tf)}{j_{\tf,\wf}(y+\ker\tf}\\
&=\sipw{x+\ker\wf}{y+\ker\wf}\\
&=\wf(x,y).
\end{split}
\end{align}
We claim that the map
\begin{align*}
\Phi_{\tf}:[\nullf,\tf]\rightarrow [0_{\tf},I_{\tf}];\qquad \Phi_{\tf}(\wf)=j_{\tf,\wf}^{*}j_{\tf,\wf}
\end{align*}
is a bijection which preserves the order in both directions. Furthermore, it has the following properties:
\begin{itemize}
        \item[(a)] $\Phi_{\tf}$ preserves convex combinations,
        \item[(b)] $\Phi_{\tf}$ preserves parallel sum,
        \item[(c)] If $\uf$ and $\wf\in[\nullf,\tf]$, then $\mathbf{D}_{\uf}\wf \in[\nullf,\tf]$ and
        $\Phi_{\tf}(\mathbf{D}_{\uf}\wf)=[\Phi_{\tf}(\uf)]\Phi_{\tf}(\wf)$.
    \end{itemize} 
Indeed, it follows from the definition of $\Phi_{\tf}$ that if $\uf,\wf\in[\nullf,\tf]$, such that $\uf\neq\wf$ then we have 
\begin{align*}\sipt[\big]{j_{\tf,\uf}^{*}j_{\tf,\uf}(x_0+\ker\tf)}{x_0+\ker\tf}=\uf[x_0]\neq\vf[x_0]=\sipt[\big]{j_{\tf,\wf}^{*}j_{\tf,\wf}(x_0+\ker\tf)}{ x_0+\ker\tf}
\end{align*}
for some $x_0\in\D$, which means $\Phi_{\tf}(\uf)\neq\Phi_{\tf}(\vf)$. Conversely, let $A$ be an element of $[0_{\tf},I_{\tf}]$. Then the sesquilinear form  $\wf(x,y):=\sipt{A(x+\ker \tf)}{ y+\ker\tf}$ is in
$[\nullf,\tf]$, and
\begin{align*}
\sipt{j_{\tf,\wf}^{*}j_{\tf,\wf}(x+\ker\tf)}{y+\ker\tf}=\wf(x,y)=\sipt{A(x+\ker \tf)}{y+\ker\tf}
\end{align*}
implies that $\Phi_{\tf}(\wf)=A$, thus $\Phi_{\tf}$ is surjective. To prove (a), let $\uf,\vf\in[\nullf,\tf]$ and $\alpha,\beta>0$ such that $\alpha+\beta=1$. Since $\D/_{\ker{\tf}}$ is dense in $\hilt$, the equality 
$$\Phi_{\tf}(\alpha\uf+\beta\vf)=\alpha\Phi_{\tf}(\uf)+\beta\Phi_{\tf}(\vf)$$ 
can be obtained by the fact that the following two identity hold for all $x,y\in\Xx$:
\begin{align*}
(\alpha\uf+\beta\vf)(x,y)=\sipt{j_{\tf,\alpha\uf+\beta\vf}^{*}j_{\tf,\alpha\uf+\beta\vf}(x+\ker\tf)}{y+\ker\tf}
\end{align*}
and
\begin{align*}
\alpha\uf(x,y)+\beta\vf(x,y)=\alpha\sipt{j_{\tf,\uf}^{*}j_{\tf,\uf}(x+\ker\tf)}{y+\ker\tf}+\beta\sipt{j_{\tf,\vf}^{*}j_{\tf,\vf}(x+\ker\tf)}{ y+\ker\tf}.
\end{align*}
It follows from the definition of $\Phi_{\tf}$ that if $(\uf_n)_{n\in\mathbb{N}}$ is a monotone increasing sequence in $[\nullf,\tf]$ such that $\sup\limits_{n\in\mathbb{N}}\uf_n=\uf$, then
        $\uf\in[\nullf,\tf]$ and $\Phi_{\tf}(\uf)=\sup\limits_{n\in\mathbb{N}}\Phi_{\tf}(\uf_n)$. Consequently, property (b)  implies property (c). Again, since $\D/_{\ker{\tf}}$ is dense in $\hilt$, the parallel sum $\Phi_{\tf}(\uf):\Phi_{\tf}(\vf)$ can be computed as
        \begin{align*}
        \begin{split}
        &\sipt[\big]{(\Phi_{\tf}(\uf):\Phi_{\tf}(\vf))(x+\ker\tf)}{x+\ker\tf}=\\
        &\qquad=\inf_{y\in \D}\big\{\sipt[\big]{\Phi_{\tf}(\uf)(x-y+\ker\tf)}{ x-y+\ker\tf }+\sipt{\Phi_{\tf}(\vf)(y+\ker\tf)}{y+\ker\tf}\big\}
        \end{split}
        \end{align*}
        for all $x\in\Xx$. But this infimum is just the quadratic form of the parallel sum of $\uf$ and $\vf$ evaluated at point $x\in\Xx$. On the other hand, we have $\sipt{\Phi_{\tf}(\uf:\vf)(x+\ker\tf)}{x+\ker\tf}=(\uf:\vf)[x]$ for all $x\in\Xx$, by the definition of $\Phi_{\tf}$. This proves $\Phi_{\tf}(\uf:\vf)=\Phi_{\tf}(\uf):\Phi_{\tf}(\vf)$.

Now, equivalence of (i), (ii), and (iii) follows immediately from Theorem \ref{T:quasiunit-operator} and the properties of $\Phi_{\tf}$ just proved. Similarly, the criterion for the existence of the infimum of $\uf$ and $\vf$ is established from Theorem 2 along the map $\Phi_{\uf+\vf}$.
\end{proof}
The following corollary is an immediate consequence of Theorem \ref{T: S3} and Corollary \ref{Q(B) lattice}.
\begin{corollary} The set $\quasi(\tf)$ of $\tf$-quasi units is a lattice. That is, the greatest lower- and the least upper bound of any two $\tf$-quasi-units exists in $\quasi(\tf)$.
\end{corollary}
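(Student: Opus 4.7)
The plan is to transport the lattice structure from the operator side to the form side through the order isomorphism $\Phi_{\tf}$ constructed in Theorem \ref{T: S3}. The key observation is that $\Phi_{\tf}$ restricts to a bijection between $\quasi(\tf)$ and the set of extreme points of $[0_{\tf},I_{\tf}]$, since both the notion of quasi-unit and the notion of extreme point are characterized internally to the segment and $\Phi_{\tf}$ preserves convex combinations together with the order in both directions.

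First I would invoke Theorem \ref{T:quasiunit-operator} with the ambient positive operator chosen to be $B=I_{\tf}$ on $\hilt$. In this case the auxiliary construction of Section \ref{S: operators} degenerates: the inner product $\sipt[\big]{I_{\tf}x}{y}$ coincides with $\sipt{x}{y}$, so $\hil_{I_{\tf}}=\hilt$ and $J_{I_{\tf}}$ is the identity. Consequently $\quasi(I_{\tf})=\mathbf{P}(\hilt)$, which is a lattice (this is exactly Corollary \ref{Q(B) lattice} in this setting). Next I would use the equivalences in Theorem \ref{T: S3} together with property~(a) of $\Phi_{\tf}$ to conclude that $\Phi_{\tf}$ sends $\quasi(\tf)$ onto $\ex[0_{\tf},I_{\tf}]=\quasi(I_{\tf})=\mathbf{P}(\hilt)$.

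Finally, since $\Phi_{\tf}$ and its inverse are both order preserving, the lattice operations on $\mathbf{P}(\hilt)$ can be pulled back: given $\uf,\vf\in\quasi(\tf)$, the forms
\begin{align*}
\uf\wedge\vf:=\Phi_{\tf}^{-1}\big(\Phi_{\tf}(\uf)\wedge\Phi_{\tf}(\vf)\big),\qquad \uf\vee\vf:=\Phi_{\tf}^{-1}\big(\Phi_{\tf}(\uf)\vee\Phi_{\tf}(\vf)\big)
\end{align*}
lie in $\quasi(\tf)$ (as $\Phi_{\tf}^{-1}$ maps projections to quasi-units) and are, respectively, the greatest lower bound and the least upper bound of $\uf$ and $\vf$ in $\quasi(\tf)$. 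There is no real obstacle here; the only point to verify carefully is that the meets and joins computed within $\quasi(\tf)$ in fact realize those of the ambient poset $\fpx$ as well when restricted to quasi-units, which follows because $\Phi_{\tf}$ is an order isomorphism between the full segments $[\nullf,\tf]$ and $[0_{\tf},I_{\tf}]$, so lower and upper bounds are reflected back from the projection lattice.
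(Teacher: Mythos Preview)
Your argument is correct and follows the paper's own route: the corollary is stated there as an immediate consequence of Theorem \ref{T: S3} and Corollary \ref{Q(B) lattice}, i.e., transport the lattice structure of $\quasi(I_{\tf})$ to $\quasi(\tf)$ via the order isomorphism $\Phi_{\tf}$. Your explicit identification $\quasi(I_{\tf})=\mathbf{P}(\hilt)$ is a harmless specialization, and the final remark about meets and joins in the ambient poset $\fpx$ is unnecessary for the corollary as stated (which only asserts that $\quasi(\tf)$ is a lattice in its own right).
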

As we have seen, the greatest lower bound of two forms may not exist. We are going to show that the greatest lower bound of a $\tf$-quasi-unit with other elements of $[\nullf,\tf]$ always exists.

\begin{corollary}\label{C: S3} Let $\tf$ be a form on $\Xx$ and assume that $\wf$ is a $\tf$-quasi-unit. Then for every $\uf\in[\nullf,\tf]$ the infimum $\wf$ and $\uf$ exists, and $\wf\wedge\uf=\mathbf{D}_{\wf}\uf$.
\end{corollary}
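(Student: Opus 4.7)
My plan is to invoke the order isomorphism $\Phi_{\tf}:[\nullf,\tf]\to[0_{\tf},I_{\tf}]$ of Theorem \ref{T: S3} and reduce the question to the corresponding operator statement on $\hilt$. Set $P:=\Phi_{\tf}(\wf)$ and $A:=\Phi_{\tf}(\uf)$. The hypothesis that $\wf$ is a $\tf$-quasi-unit, combined with Theorem \ref{T:quasiunit-operator} applied to the pair $(I_{\tf},P)$, forces $P$ to be an orthogonal projection; $A$ is just a positive contraction. Property (c) of $\Phi_{\tf}$ yields $\Phi_{\tf}(\mathbf{D}_{\wf}\uf)=[P]A$ and $\Phi_{\tf}(\mathbf{D}_{\uf}\wf)=[A]P$, so, since $\Phi_{\tf}$ and its inverse preserve order, it is enough to verify that the operator infimum $P\wedge A$ exists in $\mathbf{B}_{+}(\hilt)$ and equals $[P]A$.

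The crux of the argument is the inequality $[P]A\leq P$. By the definition of $P$-absolute continuity, $[P]A=\sup_{n} C_n$ for an increasing sequence with $C_n\leq c_n P$. Each such $C_n$ annihilates $\ker P$, so the strong limit $[P]A$ also vanishes on $\ker P$, i.e.\ $[P]A=P\,[P]A\,P$. Combining this with $[P]A\leq A\leq I_{\tf}$ I obtain
\[
P-[P]A \;=\; P\bigl(I_{\tf}-[P]A\bigr)P\;\geq\;0,
\]
which is the desired inequality $[P]A\leq P$.

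Once $[P]A\leq P$ is in hand, the conclusion drops out of the operator theory of Section \ref{S: operators}. Since $[P]A\leq A$ is a trivial domination, $[P]A$ is $A$-absolutely continuous, so $[P]A$ belongs to the set $\{C\in\mathbf{B}_{+}(\hilt): C\leq P,\ C\ll A\}$; by Theorem \ref{T: LDT} the greatest element of this set is $[A]P$, hence $[P]A\leq[A]P$. The two shorts $[P]A$ and $[A]P$ are therefore comparable, and Ando's infimum theorem (Theorem \ref{T: inf}) yields $P\wedge A=[P]A$. Pulling this back through the order-isomorphism $\Phi_{\tf}$ finishes the proof.

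The main obstacle I anticipate is the strong-limit step used to establish $[P]A=P\,[P]A\,P$ rigorously from the approximating sequence; once that algebraic identity is secured, everything else is a direct appeal to results already collected in Section \ref{S: operators}.
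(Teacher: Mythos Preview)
Your argument is correct. The logical skeleton is the same as the paper's: establish $\mathbf{D}_{\wf}\uf\leq\wf$, deduce $\mathbf{D}_{\wf}\uf\leq\mathbf{D}_{\uf}\wf$, and invoke the Ando-type comparability criterion. The difference lies in where you work and how you obtain the key inequality. The paper stays entirely at the form level and gets $\mathbf{D}_{\wf}\uf\leq\wf$ in one line from monotonicity of $\mathbf{D}_{\wf}$: since $\uf\leq\tf$, one has $\mathbf{D}_{\wf}\uf\leq\mathbf{D}_{\wf}\tf=\wf$, the last equality being exactly the quasi-unit hypothesis. You instead pass through $\Phi_{\tf}$ to operators and prove $[P]A\leq P$ by a kernel argument exploiting that $P$ is a genuine projection. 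Your route works, and the step you flagged as a potential obstacle ($[P]A=P\,[P]A\,P$ via the strongly convergent dominated sequence) is completely rigorous; but it is doing by hand what monotonicity of the short gives for free. One small point worth making explicit in your write-up: $\Phi_{\tf}$ is only an order isomorphism between $[\nullf,\tf]$ and $[0_{\tf},I_{\tf}]$, so ``pulling back'' the infimum a priori yields the infimum in $[\nullf,\tf]$; you need the trivial observation that every lower bound of $\wf$ and $\uf$ in $\fpx$ already lies in $[\nullf,\tf]$ (or simply note that your comparability $[P]A\leq[A]P$ translates via property~(c) to $\mathbf{D}_{\wf}\uf\leq\mathbf{D}_{\uf}\wf$ and apply the last sentence of Theorem~\ref{T: S3} directly).
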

\begin{proof} First observe that $\uf\leq\tf$ implies $\mathbf{D}_{\wf}\uf\leq\mathbf{D}_{\wf}\tf$, by definition. Thus we have the inequality $\mathbf{D}_{\wf}\uf\leq\wf$ according to the assumption that $\wf$ is a $\tf$-quasi-unit. Using that $\mathbf{D}_{\wf}\uf$ is $\uf$-almost dominated, we conclude $\mathbf{D}_{\wf}\uf=\mathbf{D}_{\uf}(\mathbf{D}_{\wf}\uf)\leq\mathbf{D}_{\uf}\wf$, or equivalently, $\wf\wedge\uf=\mathbf{D}_{\wf}\uf$.
\end{proof}
Since it will be used later, we mention separately a consequence of Corollary \ref{corollary [A]B quasi-unit}. Namely that $\dwt$ is a $\tf$-quasi-unit, or equivalently, a disjoint part of $\tf$ for all $\wf$, i.e.,
\begin{align}\label{F: dwt quasi-unit}
\dwt:(\tf-\dwt)=\nullf.
\end{align}
We close this section by making some remarks about applications. The results of Theorem \ref{T: S3} and Corollary \ref{C: S3} can be applied for measures, operator kernels, and representable functionals, but with careful consideration. As it was mentioned in the introduction, the necessary and sufficient condition for the existence of infimum may be only sufficient in the concrete situations, such as \cite[Example 5.3] {glasgow}. An interesting  area where the  results of this section directly appy, is the case of positive definite kernels. For the details see \cite[Section 7]{lebdec}. The main point there is \cite[Lemma 7.1]{lebdec} which says basically that there is a bijective correspondence between nonnegative operator kernels and nonnegative forms defined on an appropriate complex linear space.

\section{Parallel sum and parallel difference}\label{S: paps}
Due to the previous sections, the importance of parallel addition is obvious pertaining to the topic in consideration. In the first part of this section we are going to observe a new aspect, namely that absolute continuous elements are precisely those that are closed with respect to the closure operation induced by the parallel sum. To this end, first we recall some facts regarding parallel subtraction of forms from \cite{psonf}. To find some kind of inverse of parallel addition, it is natural to consider the equation 
\begin{align}\label{F: x:t=s}
\xf:\tf=\ssf
\end{align}
with given $\tf,\ssf\in\fpx$ and with unknown $\xf$. Although \eqref{F: x:t=s} is not always solvable, if it so, then it does exist a minimal solution according to \cite[Theorem 3.1]{psonf}. That minimal solution is 
\begin{align}\label{pd def}
(\ssf\div\tf)[x]=\sup_{y\in\Xx}\left\{\ssf[x+y]-\tf[y]\right\},\qquad x\in\Xx,
\end{align}
which is called the parallel difference of $\ssf$ and $\tf$. Observe that the equation $\xf:\wf=\tf:\wf$ is solvable for all $\tf$ and $\wf$, thus the form $(\tf:\wf)\div\wf$ always exists (which means, in addition, that the corresponding supremum is finite for every $x\in\Xx$). In fact, as it was proved by Hassi, Sebesty\'en, and de Snoo in \cite[Theorem 3.5 and Theorem 3.9]{lebdec}, this form is the largest $\wf$-closable (or $\wf$-almost dominated) part of $\tf$, i.e.,
\begin{align*}
\dwt=(\tf:\wf)\div\wf.
\end{align*}
An extremely important ingredient to prove this equality is the following line of identities
\begin{align}\label{F: t:w=dwt:w}
\tf:\wf=\dwt:\wf=\mathbf{D}_{\wf}(\tf:\wf),
\end{align}
and the crucial observation that the inequality $\tf:\wf\leq\ssf:\wf$ carries some information about the corresponding generalized shorts. More concretely, we have
\begin{align}\label{F: t:w<s:w iff Dwt<Dws}
\tf:\wf\leq\ssf:\wf\qquad\quad\mbox{if and only if}\quad\qquad\dwt\leq\mathbf{D}_{\wf}\ssf. 
\end{align}
The proof in \cite[Proposition 2.7]{lebdec} follows the algebraic argument of Eriksson and Leutwiler in \cite[Proposition 2.6 and Proposition 2.7]{EL}. This latter if and only if statement will be the key by the following simple observation. Let us recall what an antitone Galois connection is. Let $U_{{}_{\sqsubseteq}}:=(U,\sqsubseteq)$ and $V_{{}_{\leq}}:=(V,\leq)$ be two partially ordered sets. If $\alpha:U\to V$ and $\beta:V\to U$ are antitone (or order reversing) functions, such that
\begin{align}\label{gc prop}
v \leq \alpha(u)\quad\mbox{if and only if}\quad u\sqsubseteq \beta(v)\qquad\mbox{for all}~u\in U~\mbox{and}~v\in V,
\end{align}
then we say that the pair $(\alpha,\beta)$ establishes an antitone Galois connection
between $U_{{}_{\sqsubseteq}}$ and $V_{{}_{\leq}}$. 
It is easy to see that the property in \eqref{gc prop} is equivalent to
\begin{align*}
u\sqsubseteq\big(\beta\circ\alpha\big)(u)~\mbox{for all}~u\in U~\qquad\mbox{and}\qquad v\leq\big(\alpha\circ\beta\big)(v)~\mbox{for all}~v\in V.
\end{align*}

The functions $\alpha$ and $\beta$ are called polarities, the composition $\beta\circ\alpha:U \to U$ is called closure operator. The closure operator is monotone, idempotent, and extensive, that is, it satisfies 
\begin{align*}
u\sqsubseteq\big(\beta\circ\alpha\big)(u)\quad\mbox{for all}~u\in U.
\end{align*}
An element $u\in U$ is called $\beta\circ\alpha$-closed if $\big(\beta\circ\alpha\big)(u)=u$. Similarly, an element $v\in V$ is called $\alpha\circ\beta$-closed if $\big(\alpha\circ\beta\big)(v)=v$. Denoting the set of closed elements by $C_U$ and $C_V$, respectively, it is known that $\alpha\!\restriction_{C_U}$ and $\beta\!\restriction_{C_V}$ are order reversing bijections (or inverse isomorphisms) that are inverse to each other. For more details we refer the reader to \cite[Chapter  6, \S 11.1]{Kuros}. 

Now, let us fix a $\wf\in\fpx$, and consider the partially ordered sets $U_{{}_{\sqsubseteq}}:=\big(\fpx,\leq^{op}\big)$ and $V_{{}_{\leq}}:=\big(\fpx^{:\wf},\leq\big)$, where $\leq^{op}$ denotes the opposite order ($\uf\leq^{op}\vf\Leftrightarrow\vf\leq\uf$), and
\begin{align*}
\fpx^{:\wf}=\big\{\tf:\wf\,\big|\,\tf\in\fpx\big\}.\end{align*}
Then the maps $\alpha(\tf)=\tf:\wf$ and $\beta(\ssf)=\ssf\div\wf$ establish an antitone Galois connection between $U_{{}_{\sqsubseteq}}$ and $V_{{}_{\leq}}$. Indeed, $\alpha$ and $\beta$ are antitone, because of the definitions \eqref{ps def} and \eqref{pd def}. The only nontrivial fact is that these maps satisfy \eqref{gc prop}. But this is exactly the content of \eqref{F: t:w<s:w iff Dwt<Dws}. Indeed, let $\uf\in\fpx$ and $\vf:\wf\in\fpx^{:\wf}$ be arbitrary elements. If $\vf:\wf\leq\alpha(\uf)=\uf:\wf$ then $\mathbf{D}_{\wf}\vf\leq\mathbf{D}_{\wf}\uf\leq\uf$, which says \begin{align*}\uf\leq^{op}\mathbf{D}_{\wf}\vf=(\vf:\wf)\div\wf=\beta(\vf:\wf).
\end{align*}
For the converse implication assume that $\uf\leq^{op}\beta(\vf:\wf)$, that is, $\uf\geq \mathbf{D}_{\wf}\vf$. Since $\mathbf{D}_{\wf}\vf$ is $\wf$-almost dominated, this implies $\mathbf{D}_{\wf}\uf\leq^{op}\mathbf{D}_{\wf}(\mathbf{D}_{\wf}\vf)=\mathbf{D}_{\wf}\vf$. Again, by \eqref{F: t:w<s:w iff Dwt<Dws} we conclude that \begin{align*}\alpha(\uf)=\uf:\wf\geq\vf:\wf.
\end{align*}
The closed elements in $\fpx$ are those forms such that $$\tf=(\beta\circ\alpha)(\tf)=(\tf:\wf)\div\wf=\dwt.$$ That is, $\tf$ is $\beta\circ\alpha$-closed if and only if $\tf$ is $\wf$-almost dominated. Finally, we remark that $\tf:\wf=\dwt:\wf$ in \eqref{F: t:w=dwt:w} says on the one hand that $\alpha$ is not a bijection between $\fpx$ and $\fpx^{:\wf}$. Indeed, for any form $\tf\in\fpx$, $\alpha(\tf)=\alpha(\uf)$ holds for all $\uf\in[\dwt,\tf]$. On the other hand, it says that the restriction of $\alpha$ to the set $\set{\dwt}{\tf\in\fpx}$ is still surjective.\\ 

Summarizing these observations, we have the following theorem.
\begin{theorem}\label{T: Galois}
Let $\wf\in\fpx$ be a fixed form on $\Xx$, and consider the partially ordered sets $\big(\fpx,\leq^{op}\big)$ and $\big(\fpx^{:\wf},\leq\big)$. Then the $(\alpha,\beta)$ pair of order reversing functions
$$\alpha:\fpx\to\fpx^{:\wf};~\alpha(\tf)=\tf:\wf\qquad\mbox{and}\qquad\beta:\fpx^{:\wf}\to\fpx;~\beta(\ssf)=\ssf\div\wf$$
establishes an antitone Galois connection. The set of $(\beta\circ\alpha)$-closed elements is identical with the set of $\wf$-almost dominated forms. Moreover, $\alpha$ is a bijection between $\set{\dwt}{\tf\in\fpx}$ and $\set{\tf:\wf}{\tf\in\fpx}$, and its inverse is $\beta$. 
\end{theorem}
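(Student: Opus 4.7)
The plan is to verify the three pieces of the statement in turn, largely by recycling the identities already recorded in the paragraphs preceding the theorem; the argument is more bookkeeping than invention.

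First I would establish antitonicity of $\alpha$ and $\beta$. For $\alpha$, Proposition \ref{parsumproperty}(iii) says parallel addition is monotone in each argument, so $\uf\leq\vf$ gives $\uf:\wf\leq\vf:\wf$; reading the domain with the opposite order $\leq^{op}$ makes this order-reversing. For $\beta$, formula \eqref{pd def} presents parallel difference as a pointwise supremum, from which $\ssf_1\leq\ssf_2\Rightarrow\ssf_1\div\wf\leq\ssf_2\div\wf$ is immediate, and this again becomes order-reversing once the target is given $\leq^{op}$.

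Next I would check the Galois equivalence $\vf:\wf\leq\alpha(\uf)\iff\uf\leq^{op}\beta(\vf:\wf)$. Using the identity $\dwt=(\tf:\wf)\div\wf$ applied to $\vf$ in place of $\tf$, we rewrite $\beta(\vf:\wf)=\mathbf{D}_{\wf}\vf$, so the equivalence reduces to $\vf:\wf\leq\uf:\wf\Leftrightarrow \mathbf{D}_{\wf}\vf\leq\uf$. The forward direction is immediate from \eqref{F: t:w<s:w iff Dwt<Dws}, since $\vf:\wf\leq\uf:\wf$ yields $\mathbf{D}_{\wf}\vf\leq\mathbf{D}_{\wf}\uf\leq\uf$. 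For the converse, applying monotonicity of $\mathbf{D}_{\wf}$ to $\mathbf{D}_{\wf}\vf\leq\uf$ together with the idempotence $\mathbf{D}_{\wf}(\mathbf{D}_{\wf}\vf)=\mathbf{D}_{\wf}\vf$ (which is just the characterization that $\mathbf{D}_{\wf}\vf$ is $\wf$-almost dominated) gives $\mathbf{D}_{\wf}\vf\leq\mathbf{D}_{\wf}\uf$; a single invocation of \eqref{F: t:w=dwt:w} with Proposition \ref{parsumproperty}(iii) then lifts this to $\vf:\wf=\mathbf{D}_{\wf}\vf:\wf\leq\mathbf{D}_{\wf}\uf:\wf=\uf:\wf$.

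Third I would identify the closed elements on each side. A form $\tf\in\fpx$ is $\beta\circ\alpha$-closed precisely when $\tf=(\tf:\wf)\div\wf=\dwt$, i.e.\ exactly when $\tf$ is $\wf$-almost dominated; since every $\dwt$ is itself $\wf$-almost dominated, this set coincides with $\set{\dwt}{\tf\in\fpx}$. On the codomain side, $\alpha$ maps onto $\fpx^{:\wf}$ by construction, and for every $\ssf=\tf:\wf\in\fpx^{:\wf}$ the identity \eqref{F: t:w=dwt:w} gives $(\alpha\circ\beta)(\ssf)=\mathbf{D}_{\wf}\tf:\wf=\tf:\wf=\ssf$, so all of $\fpx^{:\wf}$ is already $\alpha\circ\beta$-closed. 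Invoking the general principle from \cite[Chapter 6, \S 11.1]{Kuros} that polarities restrict to mutually inverse order-reversing bijections between their closed subsets then delivers the bijection between $\set{\dwt}{\tf\in\fpx}$ and $\set{\tf:\wf}{\tf\in\fpx}$ claimed in the last sentence.

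There is no genuine obstacle here: the theorem is a formal repackaging of \eqref{F: t:w=dwt:w}, \eqref{F: t:w<s:w iff Dwt<Dws} and $(\tf:\wf)\div\wf=\dwt$ in the language of Galois connections. The only points requiring care are keeping orientations straight when $\leq$ is replaced by $\leq^{op}$ on $\fpx$, and confirming that every element of $\fpx^{:\wf}$ is $\alpha\circ\beta$-closed so that the final bijection lands on the full set $\set{\tf:\wf}{\tf\in\fpx}$ rather than a proper subset.
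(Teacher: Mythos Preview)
Your proposal is correct and follows essentially the same route as the paper: both arguments derive antitonicity from \eqref{ps def} and \eqref{pd def}, verify the Galois property via \eqref{F: t:w<s:w iff Dwt<Dws} together with the idempotence of $\mathbf{D}_{\wf}$, and identify the closed elements using $(\tf:\wf)\div\wf=\dwt$. Your explicit check that every element of $\fpx^{:\wf}$ is $\alpha\circ\beta$-closed (via \eqref{F: t:w=dwt:w}) and your appeal to the general Kuros principle is exactly how the paper handles the final bijection, just phrased slightly more abstractly.
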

In the last part of this paper we are going to prove directly that the set of $\wf$-quasi-units is a lattice. The main advantage of this new proof is that we will determine the concrete value of the greatest lower- and least upper bound. We remark that this proof could be done without using any operator theoretic results from the previous sections. Indeed, although we are going to use that a form $\tf\in[\nullf,\wf]$ is a $\wf$-quasi-unit if and only if $\tf$ and $\wf-\tf$ are singular, and this statement is a consequence of Theorem \ref{T:quasiunit-operator} (i)$\Leftrightarrow$(iv), this was proved earlier by using only forms in \cite[Theorem 11]{cof}. Similarly, the fact that $\dwt$ is a $\tf$-quasi-unit for all $\wf$ follows from Corollary \ref{corollary [A]B quasi-unit}, but it can be proved by using only form methods, as well. For such a proof, we refer the reader to \cite[Corollary 1]{e-cof}. The following characterization of quasi-units is just the translation of \cite[Theorem 4.2 (i)$\Leftrightarrow$(iii) and Corollary 4.3]{EL} to the language of forms.

\begin{lemma}\label{2(t:w)=t}
Let $\tf$ and $\wf$ be forms on $\Xx$.  Then  $\tf$ is a $\wf$-quasi-unit if and only if $\tf:\wf=\frac{1}{2}\tf$.

\end{lemma}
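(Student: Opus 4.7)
The plan is to reduce the statement to a single operator identity on the Hilbert space $\hilw$ by means of the order isomorphism $\Phi_{\wf}\colon[\nullf,\wf]\to[0_{\wf},I_{\wf}]$ of Theorem~\ref{T: S3}. Writing $A:=\Phi_{\wf}(\tf)$, the properties of $\Phi_{\wf}$---preservation of scalar multiples (inherited from preservation of convex combinations) and of parallel addition, together with $\Phi_{\wf}(\wf)=I_{\wf}$---turn the form equation $\tf:\wf=\tfrac{1}{2}\tf$ into the operator equation $A:I_{\wf}=\tfrac{1}{2}A$ in $\mathbf{B}(\hilw)$. At the same time, Theorem~\ref{T:quasiunit-operator} specialised to $B=I_{\wf}$---for which the auxiliary Hilbert space coincides with $\hilw$ itself and $J_{B}$ is the identity, so that $J_{B}PJ_{B}^{\ast}=P$---identifies the $I_{\wf}$-quasi-units with the orthogonal projections of $\hilw$. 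The lemma is therefore equivalent to the Hilbert-space claim: for $0\leq A\leq I$ on a Hilbert space, $A:I=\tfrac{1}{2}A$ if and only if $A$ is an orthogonal projection.

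For the forward direction of this reduced claim, given a projection $P$ I would compute the quadratic form $(P:I)[x]=\inf_{y}\bigl\{\|P(x-y)\|^{2}+\|y\|^{2}\bigr\}$ directly. Decomposing $y=Py+(I-P)y$, the summand $\|(I-P)y\|^{2}$ is wasted (it enters only into $\|y\|^{2}$ and not into the first term), so one may take $(I-P)y=0$; minimising the remaining quadratic in $Py$ gives the optimum at $Py=\tfrac{1}{2}Px$ with value $\tfrac{1}{2}\|Px\|^{2}=\tfrac{1}{2}\langle Px,x\rangle$, whence $P:I=\tfrac{1}{2}P$. For the converse, I would invoke the Anderson--Duffin formula $A:I=A(A+I)^{-1}$, which is legitimate because $A+I\geq I$ is invertible. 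Equating with $\tfrac{1}{2}A$ and multiplying by $A+I$ on the right gives $A=\tfrac{1}{2}(A^{2}+A)$, hence $A^{2}=A$; thus $A$ is an orthogonal projection.

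The main pitfall is that the reverse implication, stated without an a priori inclusion $\tf\leq\wf$, is not immediately accessible through $\Phi_{\wf}$ since $\tf:\wf=\tfrac{1}{2}\tf$ only forces $\tf\leq 2\wf$. The remedy is to rerun the argument through $\Phi_{\tf+\wf}$ instead: setting $A=\Phi_{\tf+\wf}(\tf)$ and $B=\Phi_{\tf+\wf}(\wf)$ one obtains $A+B=I$, the hypothesis simplifies to $AB=\tfrac{1}{2}A$, and a brief spectral argument shows the eigenvalues of $A$ lie in $\{0,\tfrac{1}{2}\}$, so $A=\tfrac{1}{2}P$ for some projection $P$; from this one directly reads off $A\leq B$ (i.e.\ $\tf\leq\wf$) and $A:(B-A)=0$ (i.e.\ $\tf$ and $\wf-\tf$ are singular), so $\tf$ is indeed a $\wf$-quasi-unit by Theorem~\ref{T: S3}(iii).
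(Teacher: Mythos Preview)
Your argument is correct, but it proceeds along a genuinely different route from the paper's own proof.

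The paper argues both implications purely within the algebra of forms, without passing through the operator isomorphism $\Phi_{\wf}$ at all. For the forward direction it uses the identity $\tf:\wf=\tf:\dtw$ from \eqref{F: t:w=dwt:w}: if $\tf$ is a $\wf$-quasi-unit then $\dtw=\tf$, so $\tf:\wf=\tf:\tf=\tfrac12\tf$ in one line. For the converse it never establishes $\tf\leq\wf$ separately; instead it runs an induction on the parallel-sum calculus. Starting from $\tf:\wf=\tfrac12\tf$ and using only associativity, commutativity and the homogeneity rule $\lambda\tf:\mu\tf=\tfrac{\lambda\mu}{\lambda+\mu}\tf$, it shows that $(\lambda_k\tf):\wf=\tfrac{\lambda_k}{1+\lambda_k}\tf$ for the recursively defined sequence $\lambda_1=1$, $\lambda_{k+1}=\lambda_k(\lambda_k+2)$, and then lets $\lambda_k\to\infty$ to obtain $\dtw=\tf$.

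Your approach exploits the bridge $\Phi_{\tf+\wf}$ to reduce everything to the Anderson--Duffin identity $A:(I-A)=A(I-A)$ and a short spectral computation; this is clean and correctly handles the issue that $\tf\leq\wf$ is not assumed a priori. What the paper's proof buys, by contrast, is exactly the self-containment that Section~\ref{S: paps} is after: the authors explicitly remark that the results of this section ``could be done without using any operator theoretic results from the previous sections,'' and their inductive argument for Lemma~\ref{2(t:w)=t} is the key step that makes the subsequent lattice theorem independent of Theorems~\ref{T:quasiunit-operator} and~\ref{T: S3}. Your proof, while valid, reimports that machinery.
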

\begin{proof}
If $\tf$ is a $\wf$-quasi-unit then $\tf:\wf=\tf:\dtw=\tf:\tf=\frac{1}{2}\tf$ according to \eqref{F: t:w=dwt:w}. To prove the converse implication, assume that $\tf:\wf=\frac{1}{2}\tf$. Define the sequence $\lambda_1=1$, $\lambda_k:=\lambda_{k-1}(\lambda_{k-1}+2)$ for $k\geq2$. Since $\lambda_k\geq 2^{k-1}$, it is enough to show that
\begin{align}\label{indukcio}
(\lambda_k\tf):\wf=\frac{\lambda_k}{1+\lambda_k}\tf
\end{align}
holds for all $k\in\dupN$. If this is the case, taking the supremum in $k\in\dupN$ we obtain $\dtw=\tf$. 
If $k=1$ then $\tf:\wf=\frac{1}{2}\tf$ holds by assumption. Now assume that \eqref{indukcio} holds for some $k\in\dupN$. Using the hypothesis twice, we obtain that
\begin{equation*}
\begin{split}
\frac{1}{1+\lk}\tf&=\tf:\frac{1}{\lk}\wf=\left(\frac{1+\lk}{\lk}\big((\lk\tf):\wf\big)\right):\frac{1}{\lk}\wf=\\
&=\left((1+\lk)\tf\right):\left(\frac{1+\lk}{\lk}\wf:\frac{1}{\lk}\wf\right)=\big((1+\lk)\tf\big):\frac{1+\lk}{\lk(\lk+2)}\wf=\\
&=(1+\lk)\left(\tf:\frac{1}{\lambda_{k+1}}\wf\right)=\frac{1+\lk}{\lkp}\big((\lkp\tf):\wf\big)
\end{split}
\end{equation*}
and hence,
\begin{equation*}
(\lkp\tf):\wf=\frac{\lkp}{(1+\lk)^2}\tf=\frac{\lkp}{1+\lkp}\tf.
\end{equation*}
\end{proof}

\begin{theorem}
Let $\wf$ be a form, and consider the set $\quasi(\wf)$ of $\wf$-quasi-units. Then the partially ordered set $(\quasi(\wf),\leq)$ is a lattice. Namely, if $\ssf$ and $\tf$ are $\wf$-quasi-units, then the greatest lower bound $\ssf\curlywedge\tf$ and the least upper bound $\ssf\curlyvee\tf$ in $\quasi(\wf)$ exist, and \begin{equation*} \ssf\curlywedge\tf=2(\ssf:\tf),\qquad\qquad\ssf\curlyvee\tf=\mathbf{D}_{\ssf+\tf}\wf.
\end{equation*}
Furthermore, $\ssf\curlywedge\tf=\ssf\wedge\tf=\mathbf{D}_{\ssf}\tf=
\mathbf{D}_{\tf}\ssf$.
\end{theorem}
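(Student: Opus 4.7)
The plan is to transfer the problem to the projection lattice of $\mathbf{B}(\hilw)$ through the order isomorphism $\Phi_{\wf}$ of Theorem \ref{T: S3}. That isomorphism identifies $[\nullf,\wf]$ with the operator segment $[0_{\wf},I_{\wf}]$ while preserving order, convex combinations, and parallel addition, and combining Theorem \ref{T: S3} with Theorem \ref{T:quasiunit-operator} it sends $\wf$-quasi-units precisely onto orthogonal projections of $\hilw$. Since the projection lattice is known to satisfy $P\wedge Q=2(P:Q)$, with this element simultaneously realising the infimum of $P$ and $Q$ in the whole of $[0_{\wf},I_{\wf}]$, pulling back through $\Phi_{\wf}^{-1}$ will at once produce the infimum in $\quasi(\wf)$ and force it to coincide with the infimum in $[\nullf,\wf]$.

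Concretely, setting $P:=\Phi_{\wf}(\ssf)$, $Q:=\Phi_{\wf}(\tf)$ and using that $\Phi_{\wf}$ preserves parallel sum yields $\ssf\curlywedge\tf=\ssf\wedge\tf=2(\ssf:\tf)$; the middle equality is because $2(P:Q)$ is again a projection, so $2(\ssf:\tf)$ is automatically a $\wf$-quasi-unit. The further identifications $\ssf\wedge\tf=\mathbf{D}_{\ssf}\tf=\mathbf{D}_{\tf}\ssf$ will follow from Corollary \ref{C: S3}, applied first with $\ssf$ in the role of the $\wf$-quasi-unit and $\tf\in[\nullf,\wf]$, and then with the roles reversed. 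As a purely form-theoretic cross-check, $\mathbf{D}_{\tf}\ssf=2(\ssf:\tf)$ can also be obtained from Lemma \ref{2(t:w)=t} alone: since $\mathbf{D}_{\tf}\ssf$ is a $\tf$-quasi-unit by \eqref{F: dwt quasi-unit}, the lemma gives $\mathbf{D}_{\tf}\ssf:\tf=\tfrac{1}{2}\mathbf{D}_{\tf}\ssf$, which combined with the identity $\mathbf{D}_{\tf}\ssf:\tf=\ssf:\tf$ from \eqref{F: t:w=dwt:w} delivers the formula.

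For the supremum I would verify the three defining properties of $\mathbf{D}_{\ssf+\tf}\wf$ directly. It is a $\wf$-quasi-unit by \eqref{F: dwt quasi-unit} applied with $\ssf+\tf$ in place of the subscript form and $\wf$ in place of $\tf$. It lies above both $\ssf$ and $\tf$ because each of them is $\leq\wf$ and $(\ssf+\tf)$-dominated, hence $(\ssf+\tf)$-almost dominated, and $\mathbf{D}_{\ssf+\tf}\wf$ is by construction the largest $(\ssf+\tf)$-almost dominated form below $\wf$. For minimality, given any $\uf\in\quasi(\wf)$ with $\ssf,\tf\leq\uf$, the inequality $\ssf+\tf\leq 2\uf$ promotes $(\ssf+\tf)$-dominated forms to $\uf$-dominated forms (up to a constant), and by passage to monotone suprema promotes $(\ssf+\tf)$-almost dominated forms to $\uf$-almost dominated ones; since $\uf=\mathbf{D}_{\uf}\wf$ is itself the largest $\uf$-almost dominated form below $\wf$, this forces $\mathbf{D}_{\ssf+\tf}\wf\leq\uf$.

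The main obstacle is the supremum part: naively transporting the join $P\vee Q$ through $\Phi_{\wf}$ yields only the candidate $\wf-2\bigl((\wf-\ssf):(\wf-\tf)\bigr)$, whose identification with $\mathbf{D}_{\ssf+\tf}\wf$ is not transparent from parallel-sum calculus. My plan bypasses this rewriting entirely by leaning on the universal property of the generalized short together with the characterization $\uf=\mathbf{D}_{\uf}\wf$ of $\wf$-quasi-units, which jointly handle both the existence of $\mathbf{D}_{\ssf+\tf}\wf$ as a quasi-unit and the minimality step cleanly.
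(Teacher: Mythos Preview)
Your argument is correct and complete, but it follows a genuinely different route from the paper for the infimum half. The paper's proof is deliberately \emph{direct} and form-theoretic, never invoking the transfer map $\Phi_{\wf}$: it first verifies $2(\ssf:\tf)\in\quasi(\wf)$ by a bare parallel-sum computation using Lemma~\ref{2(t:w)=t}, namely $[2(\ssf:\tf)]:\wf=2[(\ssf:\wf):(\tf:\wf)]=2[(\tfrac12\ssf):(\tfrac12\tf)]=\ssf:\tf$, then checks the lower-bound and greatest-lower-bound properties by the elementary estimates $2(\ssf:\tf)\leq 2(\ssf:\wf)=\ssf$ and $2(\ssf:\tf)\geq 2(\uf:\uf)=\uf$. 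The identities $\mathbf{D}_{\ssf}\tf=\mathbf{D}_{\tf}\ssf=2(\ssf:\tf)$ are likewise obtained by direct manipulation rather than by Corollary~\ref{C: S3}. Your approach, transporting everything through $\Phi_{\wf}$ to the projection lattice, is more conceptual and shorter, but it leans on the operator-theoretic machinery of Section~\ref{S: operators}; the paper's point here is precisely that one can avoid that machinery and still extract the explicit formulae. For the supremum, your verification of the three properties of $\mathbf{D}_{\ssf+\tf}\wf$ is essentially the paper's own argument.

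One small slip in your cross-check: equation~\eqref{F: dwt quasi-unit} says that $\mathbf{D}_{\tf}\ssf$ is an $\ssf$-quasi-unit (a disjoint part of $\ssf$), not a $\tf$-quasi-unit, so Lemma~\ref{2(t:w)=t} yields $\mathbf{D}_{\tf}\ssf:\ssf=\tfrac12\mathbf{D}_{\tf}\ssf$ rather than $\mathbf{D}_{\tf}\ssf:\tf=\tfrac12\mathbf{D}_{\tf}\ssf$. Since this paragraph is only an optional alternative to the Corollary~\ref{C: S3} route and the latter already delivers $\mathbf{D}_{\tf}\ssf=\ssf\wedge\tf=2(\ssf:\tf)$, the overall proof stands; you should either drop the cross-check or repair it (e.g.\ first deduce $\mathbf{D}_{\tf}\ssf\leq\mathbf{D}_{\tf}\wf=\tf$ and then argue separately that it is a $\tf$-quasi-unit).
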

\begin{proof}
First observe that $2(\ssf:\tf)$ is a $\wf$-quasi-unit according to the previous lemma. Indeed,
\begin{equation*}
[2(\ssf:\tf)]:\wf=[2(\ssf:\tf)]:[2(\wf:\wf)]=2[(\ssf:\wf):(\tf:\wf)]=2[(\frac{1}{2}\ssf):(\frac{1}{2}\tf)]=\ssf:\tf.
\end{equation*}
Now let $\uf$ be a form such that $\uf\leq\ssf$ and $\uf\leq\tf$. According to
\begin{equation*}
2(\ssf:\tf)\geq 2(\uf:\uf)=\uf,
\end{equation*}
it is enough to show that $2(\ssf:\tf)\leq\ssf$ and $2(\ssf:\tf)\leq\tf$. This follows immediately from the previous lemma, because
$2(\ssf:\tf)\leq2(\wf:\tf)=\tf$ and $2(\ssf:\tf)\leq2(\ssf:\wf)=\ssf$. This shows also that $2(\ssf:\tf)$ is the infimum of $\ssf$ and $\tf$ in $\fpx$.
Now we show that $2(\ssf:\tf)=\mathbf{D}_{\ssf}\tf=
\mathbf{D}_{\tf}\ssf$.
Since $\dts\leq\dtw=\tf$ we have $\dts=\mathbf{D}_{\ssf}(\dts)\leq\dst$. And by symmetry, $\dst\leq\dts$. On the other hand, $\dst\leq\tf$ and $\dts\leq\ssf$ imply $\dst:\dtw\leq\tf:\ssf$. Since $\dst=\dts$ we conclude that $\dst=\dts\leq 2(\tf:\ssf)$. For the converse inequality observe that $2(\ssf:\tf)\leq2(\ssf:\wf)=\ssf\leq\wf$, and hence $2(\ssf:\tf)=\mathbf{D}_{\tf}(2(\ssf:\tf))\leq\dtw$.

To prove that the least upper bound of $\ssf$ and $\tf$ in $\quasi(\wf)$ is $\mathbf{D}_{\ssf+\tf}\wf$ take an $\uf\in \quasi(\wf)$ such that $\uf\geq\ssf$ and $\uf\geq\tf$ and observe that $\uf=\mathbf{D}_{\uf}\wf=\mathbf{D}_{2\uf}\wf
\geq\mathbf{D}_{\ssf+\tf}\wf$. Since $\mathbf{D}_{\ssf+\tf}\wf$ is a $\wf$-quasi-unit according to \eqref{F: dwt quasi-unit}, the proof is complete by the inequalities $\mathbf{D}_{\ssf+\tf}\wf\geq\mathbf{D}_{\ssf}\wf=\ssf$ and $\mathbf{D}_{\ssf+\tf}\wf\geq\mathbf{D}_{\tf}\wf\geq\tf$.
\end{proof}

\end{document}